\renewcommand\section{\@startsection{section}{1}%
  \z@{.7\linespacing\@plus\linespacing}{.5\linespacing}%
  {\Large\rmfamily\scshape\centering}}
\begin{document}
\title[Multilevel quadrature for elliptic problems on random domains]
{Multilevel quadrature\\ 
for elliptic problems on random domains\\
by the coupling of FEM and BEM}
\author{Helmut~Harbrecht}
\author{Marc Schmidlin}
\address{
Helmut Harbrecht and Marc Schmidlin,
Universit\"at Basel,
Departement Mathematik und Informatik,
Spiegelgasse 1, 4051 Basel, Schweiz
}
\email{\{helmut.harbrecht,marc.schmidlin\}@unibas.ch}
\thanks{The authors gratefully acknowledge the 
support from the Swiss National Science Foundation 
(Grant No.\ 205321\_169599).}
\date{}

\begin{abstract}
Elliptic boundary value problems which are posed
on a random domain can be mapped to a fixed, nominal domain.
The randomness is thus transferred to the diffusion
matrix and the loading.
While this domain mapping method is quite efficient for theory and practice,
since only a single domain discretisation is needed,
it also requires the knowledge of the domain mapping.

However,
in certain applications, the random domain is only
described by its random boundary,
while the quantity of interest is defined on a fixed, deterministic subdomain.
In this setting,
it thus becomes necessary to compute a random domain mapping
on the whole domain,
such that the domain mapping is the identity on the fixed subdomain
and maps the boundary of the chosen fixed, nominal domain on to the random boundary.

To overcome the necessity of computing such a mapping,
we therefore couple the finite element method on the fixed subdomain
with the boundary element method on the random boundary.
We verify the required regularity of the solution
with respect to the random domain mapping for
the use of multilevel quadrature,
derive the coupling formulation,
and show by numerical results that the approach is feasible.
\end{abstract}

\keywords{Uncertainty quantification, random domain, regularity, 
multilevel method, FEM-BEM coupling}
\subjclass[2010]{35R60, 65N30, 65N38}
\maketitle

\section{Introduction}\label{section:introduction}
Many practical problems in science and engineering lead
to elliptic boundary value problems for an unknown function.
Their numerical treatment by e.g.\ finite difference
or finite element methods is in general well understood provided 
that the input parameters are given exactly.
This, however, is often not the case in practical applications. 

If a statistical description of the input data is available, 
one can mathematically describe data and solutions as 
random fields and aim at the computation of corresponding 
deterministic statistics of the unknown random solution. 
The present article is dedicated to the treatment of uncertainties 
in the description of the computational domain.
Applications are, besides traditional engineering,
for example uncertain domains which are derived from inverse methods
such as tomography.
In recent years, this situation has become of growing interest,
see e.g.~\cite{CK07,CNT16,HPS16,HSS08,MNK11,TX06,XT06} and the references therein. 

In this article, we are first going to focus on the so-called domain mapping method,
which has been introduced in \cite{XT06} and rigorously analysed in \cite{CNT16,HPS16},
where analytic dependency of the solution on the random domain mapping
with regard to the energy norm has been verified.
Given enough spatial regularity of the random domain mapping,
we first prove that the solution is analytically dependent on the random domain mapping
also in the $H^s(D)$-norm.
The key idea of the method is to map the boundary value problem 
\begin{equation}\label{eq:SPDE1}
  -\Laplace_\xbfm u[\omega] = f \text{ in $\Dfrak[\omega]$}, \quad
  u[\omega] = 0 \text{ on $\partial \Dfrak[\omega]$},
\end{equation}
which is posed on a random domain
\begin{equation*}
  \Dfrak[\omega] \isdef \Vbfm[\omega](D) \subset \Rbbb^d
\end{equation*}
onto a fixed, nominal \emph{reference} domain $D \subset \Rbbb^d$.
Thus, the randomness is transferred to the diffusion matrix
and the loading of the boundary value problem
\begin{equation}\label{eq:SPDE2}
  -\Div_\xbfm\groupp[\big]{\hat{\Abfm}[\omega] \Grad_\xbfm \hat{u}[\omega]} 
  = \hat{f}[\omega] \text{ in $D$}, \quad
  \hat{u}[\omega] = 0 \text{ on $\partial D$}.
\end{equation}
Herein, it holds
\begin{equation}\label{eq:pullbackcoefficientdata}
  \hat{\Abfm}[\omega]
  \isdef \groupp[\big]{\Jbfm[\omega]^\trans \Jbfm[\omega]}^{-1} \det\Jbfm[\omega]
  \quad\text{and}\quad
  \hat{f}[\omega]
  \isdef \groupp[\big]{f \circ \Vbfm[\omega]} \det\Jbfm[\omega],
\end{equation}
where $\Jbfm[\omega]$ denotes the Jacobian of the field
$\Vbfm[\omega] \colon D \to \Dfrak[\omega]$
\begin{equation}\label{eq:jacobian}
  \Jbfm[\omega](\xbfm) \isdef \Dif_\xbfm \Vbfm[\omega](\xbfm) .
\end{equation}
and $\hat{u}[\omega]$ is connected to $u[\omega]$ by
$\hat{u}[\omega] \isdef \groupp[\big]{u[\omega] \circ \Vbfm[\omega]}$.

While the random domain mapping approach is mathematically natural,
it is not neccessarily the setting,
which is directly encountered in practical applications.
This mainly stems from the fact that the random domain mapping
does not only describe the random domains themselves
but also includes a specific point correspondence between the domain realisations.
In applications often only a description of the random boundary might be known,
however in such cases the quantity of interest 
\begin{equation}\label{eq:QoI}
  \QoI(u) = \int_\Omega \Fcal\groupp[\big]{u[\omega]\vert_B} \dif\!\Pbbb[\omega]
\end{equation}
is generally sought on a \emph{deterministic} subdomain, $B$,
which almost surely is a subset of the domain realisations.
Therefore,
it is then necessary to be able to transform the description of the random domains
given by a description of the random boundary and the specification of the subdomain
into the form of a random domain mapping.
In \cite{XT06}, the authors consider by using the vector\hyp{}valued Laplace equation
to compute such a random domain mapping. If more structure is given, for 
example when the random domains are described by star-shaped boundaries
or more generally when they are directly given by a boundary mapping from 
a nominal boundary, one may also consider other approaches, such as 
transfinite interpolation techniques, see e.g.~\cite{G71,GH73,GT82},
to extend the mapping onto the whole reference domain.

However,
to overcome the necessity of computing such a random domain mapping in this setting,
we propose to compute the quantity of interest by performing the calculations
on the realisations of the random domains.
Moreover,
we can also sidestep the generation of a mesh on the random part of the domain
$\Dfrak[\omega] \setminus B$,
by coupling finite element methods with 
boundary element methods for the spatial approximation as follows:
we apply finite elements on the subdomain $B$ 
and treat the rest of the domain by a boundary element method.
This is advantageous,
since large domain deformations on coarse discretisations can be handled easily,
as we do not need to mesh the random part of the domain but only its boundary.
We present the resulting coupling formulation and 
then discuss the efficient solution by multilevel quadrature methods.
Especially, since we verify the required regularity with respect to the random
perturbation field,
we also know that we have the required regularity on the deterministic subdomain $B$,
under the assumption that there exists a transform from the random boundary description
to the random domain mapping,
which has sufficient regularity.

The rest of this article is organized as follows. Section
\ref{section:modelproblem} is dedicated to the mathematical 
formulation of the problem under consideration. The problem's 
regularity is studied in Section~\ref{section:regularity}. Here, we
provide estimates in stronger spatial norms which are needed 
for multilevel accelerated quadrature methods. The coupling of 
finite elements and boundary elements is the topic of Section
\ref{section:coupling}. The multilevel quadrature method for the
solution of the random boundary value problem is then introduced
in Section~\ref{section:MLQ}. Numerical experiments are carried 
out in Section~\ref{section:results}. Finally, we state concluding 
remarks in Section~\ref{section:conclusion}.

\section{Notation and model problem}\label{section:modelproblem}
%
%
Before we complete the mathematical setting of our model problem,
we will introduce the notations used throughout the rest of the article.
Especially, for the regularity considerations in Section~\ref{section:regularity}
some of the notation --- and the choice of a certain weighting in the
Sobolev--Bochner norms --- helps keep formulas somewhat more concise and compact.
\subsection{Notation and precursory remarks}
We use $\Nbbb$ to denote the natural numbers including $0$
and $\Nbbb^*$ when excluding $0$.

For a sequence of natural numbers,
$\alphabfm = \groupb{\alpha_n}_{n \in \Nbbb^*} \in \Nbbb^{\Nbbb^*}$,
we define the support of the sequence as
\begin{equation*}
  \supp \alphabfm = \groupb{n \in \Nbbb^* \,|\, \alpha_n \neq 0}
\end{equation*}
and say that $\alphabfm$ is finitely supported,
if $\supp \alphabfm$ is of finite cardinality,
Then,
$\Nbbb^{\Nbbb^*}_f$ denotes the set of finitely supported sequences of natural numbers
and we refer to its elements as multi\hyp{}indices.
Furthermore,
for all $m \in \Nbbb^*$ we will identify the elements
$\alphabfm = \groupp{\alpha_1, \ldots, \alpha_m} \in \Nbbb^m$
with their extension by zero into $\Nbbb^{\Nbbb^*}_f$,
that is $\alphabfm = \groupp{\alpha_1, \ldots, \alpha_m, 0, \ldots}$.
Thus,
by this identification,
all notations defined for elements of $\Nbbb^{\Nbbb^*}_f$
also carry over to the elements of $\Nbbb^m$
and we also refer to elements of $\Nbbb^m$ as multi\hyp{}indices.

For multi\hyp{}indices
$\alphabfm = \groupb{\alpha_n}_{n \in \Nbbb^*}, \betabfm = \groupb{\beta_n}_{n \in \Nbbb^*} \in \Nbbb^{\Nbbb^*}_f$
and a sequence of real numbers $\gammabfm = \groupb{\gamma_n}_{n \in \Nbbb^*} \in \Rbbb^{\Nbbb^*}$,
we use the following common notations:
\begin{align*}
  \norms{\alphabfm} &\isdef \sum_{n \in \supp \alphabfm} \alpha_n , &
  \alphabfm! &\isdef \prod_{n \in \supp \alphabfm} \alpha_n! , \\
  \binom{\alphabfm}{\betabfm} &\isdef \prod_{n \in \supp \alphabfm \cup \supp \betabfm} \binom{\alpha_n}{\beta_n} , &
  \gammabfm^\alphabfm &\isdef \prod_{n \in \supp \alphabfm} \gamma_n^{\alpha_n} .
\end{align*}
Furthermore,
we say that $\alphabfm \leq \betabfm$ holds,
when $\alpha_j \leq \beta_j$ holds for all $j \in \supp \alphabfm \cup \supp \betabfm$,
and $\alphabfm < \betabfm$, when $\alphabfm \leq \betabfm$ and $\alphabfm \neq \betabfm$ hold.

Subsequently,
we will always equip $\Rbbb^m$ with the norm $\norm{\cdot}_2$
induced by the canonical inner product $\langle\cdot,\cdot\rangle$
and $\Rbbb^{m \times m}$ with the induced norm $\norm{\cdot}_2$.
Moreover, when considering $\Rbbb^m$ itself or an open domain $\Dcal \subset \Rbbb^m$
as a measure space we always equip it with the Lebesgue measure.
Similarly,
we always equip $\Nbbb$ and $\Nbbb^*$ with the counting measure,
when considering them as measure spaces.

Let $\Xcal$, $\Xcal_1, \ldots, \Xcal_r$ and $\Ycal$ be Banach spaces,
then we denote the Banach space of bounded, linear maps from $\Xcal$ to $\Ycal$
as $\Bcal(\Xcal; \Ycal)$;
furthermore,
we recursively define
\begin{equation*}
  \Bcal(\Xcal_1, \ldots, \Xcal_r; \Ycal) \isdef \Bcal\groupp[\big]{\Xcal_1; \Bcal(\Xcal_2, \ldots, \Xcal_r; \Ycal)}
\end{equation*}
and the special case
\begin{equation*}
  \Bcal^0(\Xcal; \Ycal) \isdef \Ycal
  \quad\text{and}\quad
  \Bcal^{r+1}(\Xcal; \Ycal) \isdef \Bcal\groupp[\big]{\Xcal; \Bcal^r(\Xcal; \Ycal)} .
\end{equation*}
For $\Tbfm \in \Bcal(\Xcal_1, \ldots, \Xcal_r; \Ycal)$ and $\vbfm_j \in \Xcal_j$
we use the shorthand notation
$\Tbfm \vbfm_1 \cdots \vbfm_r \isdef \Tbfm(\vbfm_1, \ldots, \vbfm_r) \in \Ycal$.

For a given Banach space $\Xcal$ and a complete measure 
space $\Mcal$ with measure $\mu$ the space $L_\mu^p(\Mcal; \Xcal)$ 
for $1 \leq p \leq \infty$ denotes the Bochner space, see \cite{HillePhillips},
which contains all equivalence classes of strongly measurable functions
$v \colon \Mcal \to \Xcal$ with finite norm
\begin{equation*}
  \norm{v}_{p, \Mcal; \Xcal}
  \isdef 
  \norm{v}_{L_\mu^p(\Mcal; \Xcal)} \isdef
  \begin{cases}
    \groups[\bigg]{\displaystyle\int_\Mcal \norm[\big]{v(x)}_{\Xcal}^p \dif\,\mu(x)}^{1/p} , & p < \infty , \\
    \displaystyle\esssup_{x \in \Mcal} \norm[\big]{v(x)}_{\Xcal} , & p = \infty .
  \end{cases}
\end{equation*}
A function $v \colon \Mcal \to \Xcal$ is strongly measurable if there exists 
a sequence of countably\hyp{}valued measurable functions $v_n \colon \Mcal \to \Xcal$,
such that for almost every $m \in \Mcal$ we have $\lim_{n \to \infty} v_n(m) = v(m)$.
Note that, for finite measures $\mu$, we also have the usual inclusion
$L_\mu^p(\Mcal; \Xcal) \supset L_\mu^q(\Mcal; \Xcal)$ for $1 \leq p < q \leq \infty$.

For a given Banach space $\Xcal$ and an open domain $\Dcal \subset \Rbbb^d$,
with $d \in \Nbbb^*$,
the space $W^{\eta, p}(\Dcal; \Xcal)$
for $\eta \in \Nbbb$ and $1 \leq p \leq \infty$ denotes the Sobolev--Bochner space,
which contains all equivalence classes of strongly measurable functions
$v \colon \Dcal \to \Xcal$,
such that the function itself and all weak derivatives up to total order $\eta$
are in $L^p(\Dcal; \Xcal)$ with the norm
\begin{equation*}
  \norm{v}_{\eta, p, \Dcal; \Xcal}
  \isdef \norm{v}_{W^{\eta, p}(\Dcal; \Xcal)}
  \isdef
  \sum_{\norms{\alphabfm} \leq \eta} \frac{1}{\alphabfm!}
  \norm[\big]{\pdif_\xbfm^\alphabfm v}_{p, \Dcal; \Xcal} .
\end{equation*}
Moreover,
$W_0^{\eta, p}(\Dcal; \Xcal)$ denotes the closure of the linear subspace of
smooth functions with compact support, $C_c^\infty(\Dcal; \Xcal)$, in $W^{\eta, p}(\Dcal; \Xcal)$
and we set $H^\eta(\Dcal; \Xcal) \isdef W^{\eta, 2}(\Dcal; \Xcal)$
and $H_0^\eta(\Dcal; \Xcal) \isdef W_0^{\eta, 2}(\Dcal; \Xcal)$.
As usual,
we use $C^{\omega}(\Dcal; \Xcal)$ to denote the real analytic functions from $\Dcal$ to $\Xcal$
and $C^{k, s}(\Dcal; \Xcal)$ to denote the H\"older spaces.
For a bi\hyp{}Lipschitz function $v \colon \Dcal \to \Xcal$ we denote its bi\hyp{}Lipschitz constants by
\begin{align*}
  \norms{v}_{\Lipl(\Dcal; \Xcal)}
  &\isdef \essinf_{\xbfm, \ybfm \in \Dcal,\, \xbfm \neq \ybfm} \frac{\norm{v(\xbfm) - v(\ybfm)}_{\Xcal}}{\norm{\xbfm - \ybfm}}, \\
  \norms{v}_{\Lipu(\Dcal; \Xcal)}
  &\isdef \esssup_{\xbfm, \ybfm \in \Dcal,\, \xbfm \neq \ybfm} \frac{\norm{v(\xbfm) - v(\ybfm)}_{\Xcal}}{\norm{\xbfm - \ybfm}} .
\end{align*}

In the notation for the Bochner, Sobolev--Bochner and H\"older spaces, 
we may omit specifying the Banach space $\Xcal$ when $\Xcal = \Rbbb$.
Especially, $H^{-\eta}(\Dcal)$ denotes the topological dual space of $H_0^\eta(\Dcal)$.
Moreover,
if the $\Xcal$ we are considering is itself a Bochner or Sobolev--Bochner space,
then we replace the $\Xcal$ in the subscript of the norm with the subscripts of its norm,
for example
\begin{equation*}
  \norm{v}_{p, \Mcal; \eta, q, \Dcal; \Ycal}
  = \norm{v}_{p, \Mcal; W^{\eta, q}(\Dcal; \Ycal)}
  = \norm{v}_{L_\mu^p(\Mcal; W^{\eta, q}(\Dcal; \Ycal))} .
\end{equation*}

Lastly,
to avoid the use of generic but unspecified constants in
certain formulas, we use $c \lesssim d$ to mean that $c$ can be bounded by
a multiple of $d$, independently of parameters which $c$ and $d$ may depend on.
Obviously, $c \gtrsim d$ is defined as $d \lesssim c$ and we write
$c \eqsim d$ if $c \lesssim d$ and $c \gtrsim d$.
\subsection{Model problem}
Let $\tau \in \Nbbb$ and $d \in \Nbbb^\ast$;
$D \subset \Rbbb^d$ denote the \emph{reference domain}
with boundary $\partial D$ that is of class $C^{\tau, 1}$ ---
when $\tau = 1$ then we also consider the case where $D$ is
a bounded and convex domain with Lipschitz continuous boundary ---
and $(\Omega, \Fcal, \Pbbb)$ be a separable, complete probability space
with $\sigma$-field $\Fcal \subset 2^\Omega$ and probability measure $\Pbbb$.
Furthermore, let
\begin{equation*}
  \Vbfm \in L_\Pbbb^\infty\groupp[\big]{\Omega; C^{\tau, 1}(\overline{D}; \Rbbb^d)}
\end{equation*}
be the \emph{random domain mapping}.
Moreover, we require that,
for $\Pbbb$-almost any $\omega$,
$\Vbfm[\omega] \colon D \to \Dfrak[\omega]$ is bi\hyp{}Lipschitz
and fulfils the \emph{uniformity condition}
\begin{equation*}
  \underline{\sigma}
  \leq \norms[\big]{\Vbfm[\omega]}_{\Lipl(D; \Rbbb^d)}
  \leq \norms[\big]{\Vbfm[\omega]}_{\Lipu(D; \Rbbb^d)}
  \leq \overline{\sigma}
\end{equation*}
for $0 < \underline{\sigma} \leq \overline{\sigma} < \infty$ independent of $\omega$.
Finally, we require that the we have a \emph{hold-all domain} $\Dcal$
that satisfies $\Dfrak[\omega] \subset \Dcal$ for $\Pbbb$-almost any $\omega \in \Omega$
and consider $f \in C^\omega(\Dcal)$.

Note that while we restrict ourselves to the Poisson equation here to simplify
the analysis, the extension of the regularity result
to an operator $\Div_\xbfm \Abfm \Grad_\xbfm$,
with an $\Abfm \in C^\omega(\Dcal; \Rbbb^{d \times d})$ and $\Abfm$ fulfilling
an ellipticity condition is straightforward.

While, by definition,
we know that $\Vbfm[\omega]$ is a $C^{0,1}$-diffeomorphism from $D \to \Dfrak[\omega]$
for $\Pbbb$-almost any $\omega \in \Omega$,
we also have the following stronger result.
\begin{proposition}
  For $\Pbbb$-almost any $\omega \in \Omega$,
  $\Vbfm[\omega]$ is a $C^{\tau,1}$-diffeomorphism from $D$ to $\Dfrak[\omega]$.
\end{proposition}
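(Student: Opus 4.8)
The plan is as follows. Since $C^{0,1}$ is just the Lipschitz class, for $\tau=0$ the assertion coincides with the standing bi\hyp{}Lipschitz hypothesis on $\Vbfm[\omega]$, so I would assume $\tau\geq1$ and fix an $\omega$ in the set of full probability on which $\Vbfm[\omega]\in C^{\tau,1}(\overline{D};\Rbbb^d)$ and $\Vbfm[\omega]\colon D\to\Dfrak[\omega]$ is bi\hyp{}Lipschitz with the uniform constants $\underline{\sigma}\leq\overline{\sigma}$. A bi\hyp{}Lipschitz map is injective, and $\Vbfm[\omega]$ is surjective onto $\Dfrak[\omega]=\Vbfm[\omega](D)$ by definition, so $\Vbfm[\omega]\colon D\to\Dfrak[\omega]$ is a bijection whose inverse is globally Lipschitz with constant at most $1/\underline{\sigma}$; by the invariance of domain $\Dfrak[\omega]$ is open and $\Vbfm[\omega]$ is a homeomorphism onto it, which by uniform continuity extends to a homeomorphism $\overline{D}\to\overline{\Dfrak[\omega]}$ carrying $\partial D$ onto $\partial\Dfrak[\omega]$.

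Next I would upgrade this homeomorphism to a $C^1$\hyp{}diffeomorphism. Because $\tau\geq1$ we have $\Vbfm[\omega]\in C^1(\overline{D};\Rbbb^d)$, so for every interior point $\xbfm\in D$ and unit vector $\vbfm$ the difference quotient $t^{-1}\bigl(\Vbfm[\omega](\xbfm+t\vbfm)-\Vbfm[\omega](\xbfm)\bigr)$ tends to $\Jbfm[\omega](\xbfm)\vbfm$ as $t\to0$; inserting the lower bi\hyp{}Lipschitz bound before passing to the limit gives $\norm{\Jbfm[\omega](\xbfm)\vbfm}\geq\underline{\sigma}\norm{\vbfm}$, and by continuity of $\Jbfm[\omega]$ on $\overline{D}$ this extends to all of $\overline{D}$, so $\det\Jbfm[\omega](\xbfm)\neq0$ everywhere on $\overline{D}$. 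Combined with the bijectivity of the first step, the inverse function theorem shows that $\Vbfm[\omega]$ is a $C^1$\hyp{}diffeomorphism onto $\Dfrak[\omega]$ --- the local $C^1$ inverses it furnishes must agree with the global inverse already constructed, by injectivity --- and that
\begin{equation*}
  \Dif_\ybfm\bigl[\Vbfm[\omega]^{-1}\bigr](\ybfm)=\Jbfm[\omega]\bigl(\Vbfm[\omega]^{-1}(\ybfm)\bigr)^{-1}
\end{equation*}
holds on $\overline{\Dfrak[\omega]}$, the right\hyp{}hand side being continuous there since $\Jbfm[\omega]$ is invertible throughout $\overline{D}$.

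The remaining, and main, step is to bootstrap this identity. I would use that matrix inversion is real analytic on the open set of invertible $d\times d$ matrices, that $\Jbfm[\omega]\in C^{\tau-1,1}(\overline{D};\Rbbb^{d\times d})$ because $\Vbfm[\omega]\in C^{\tau,1}$, and that $C^{k,1}$ on the (bounded) domains at hand is closed under composition for every $k\in\Nbbb$. Starting from $\Vbfm[\omega]^{-1}\in C^{0,1}$, which is the first step, an induction on $j$ for $0\leq j\leq\tau-1$ then shows that whenever $\Vbfm[\omega]^{-1}\in C^{j,1}$ the displayed formula exhibits $\Dif\bigl[\Vbfm[\omega]^{-1}\bigr]$ as a composition of $C^{j,1}$ maps, whence $\Vbfm[\omega]^{-1}\in C^{j+1,1}$; after $\tau$ steps this gives $\Vbfm[\omega]^{-1}\in C^{\tau,1}(\overline{\Dfrak[\omega]};\Rbbb^d)$, which together with $\Vbfm[\omega]\in C^{\tau,1}(\overline{D};\Rbbb^d)$ proves the proposition.

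The real obstacles, I expect, lie in carrying the invertibility of $\Jbfm[\omega]$ and the $C^1$\hyp{}regularity of $\Vbfm[\omega]^{-1}$ all the way up to the boundary --- here one has to exploit that $\partial D$ is of class $C^{\tau,1}$, for instance by flattening the boundary locally with a $C^{\tau,1}$ chart and arguing in a half\hyp{}ball --- and in the bookkeeping behind the higher\hyp{}order chain rule and the closedness of the H\"older classes $C^{k,1}$ under composition that drives the bootstrap; the injectivity, the openness of $\Dfrak[\omega]$, and the difference\hyp{}quotient limit are routine.
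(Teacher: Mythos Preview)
Your proposal is correct and follows essentially the same route as the paper: both arguments rest on the inverse function theorem together with the explicit formula $\Dif\bigl[\Vbfm[\omega]^{-1}\bigr]=(\Jbfm[\omega]\circ\Vbfm[\omega]^{-1})^{-1}$ for the derivative of the inverse. The only cosmetic difference is that the paper first invokes the $C^\tau$ version of the inverse function theorem in one step and then separately bounds the Lipschitz constant of $\Dif^\tau\Vbfm[\omega]^{-1}$ (noting in passing that this bound is uniform in $\omega$), whereas you bootstrap directly through the H\"older classes $C^{j,1}$ for $j=0,\ldots,\tau-1$; both unpack the same formula.
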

\begin{proof}
  The fact that $\Vbfm[\omega]$ is a $C^{\tau}$-diffeomorphism follows directly
  from the inverse funtion theorem.
  Then,
  with the explicit formula for the $\tau$\hyp{}th derivative of $\Vbfm[\omega]^{-1}$
  from the inverse funtion theorem,
  one can bound $\norms[\big]{\Dif^{\tau} \Vbfm[\omega]^{-1}}_{\Lipu(D; \Rbbb^d)}$
  independently of $\omega$.
\end{proof}

Now, since for $\Pbbb$-almost any $\omega \in \Omega$ we have
a $C^{\tau,1}$-diffeomorphism from $D \to \Dfrak[\omega]$
we can use the one-to-one correspondence to pull back the model problem
onto the reference domain $D$ instead of considering it
on the actual domain realisations $\Dfrak[\omega]$.
According to the chain rule,
we then have for $v \in H^1(\Dfrak[\omega])$ that
$v \circ \Vbfm[\omega] \in H^1(D)$ and
\begin{equation*}
  (\Grad_\xbfm v) \circ \Vbfm[\omega]
  = \groupp[\big]{\Jbfm[\omega]}^{-\trans} \Grad_\xbfm \groupp[\big]{v \circ \Vbfm[\omega]} .
\end{equation*}

Now, with \eqref{eq:pullbackcoefficientdata}
this leads us to the following formulation
of our model problem \eqref{eq:SPDE2} on the reference domain, cf.\ \cite{HPS16}:
\begin{equation}\label{eq:pullbackswsodp}
  \left\{
  \begin{aligned}
    & \text{Find $\hat{u} \in L_\Pbbb^\infty\groupp[\big]{\Omega; H_0^1(D)}$ such that} \\
    & \qquad \int_{D} \groupa[\big]{\hat{\Abfm}[\omega](\xbfm) \Grad_\xbfm \hat{u}[\omega](\xbfm), \Grad_\xbfm \hat{v}(\xbfm)} \dif\!\xbfm
    = \int_{D} \hat{f}[\omega](\xbfm) \hat{v}(\xbfm) \dif\!\xbfm \\
    & \text{for $\Pbbb$-almost every $\omega \in \Omega$ and all $\hat{v} \in H_0^1(D)$.}
  \end{aligned}
  \right.
\end{equation}
Note, especially, that by the uniformity condition we have that
\begin{equation}\label{eq:Aellipticity}
  \frac{\underline{\sigma}^d}{\overline{\sigma}^2} \leq
  \essinf_{\omega \in \Omega} \essinf_{\xbfm \in D} \lambda_{\min} \groupp[\big]{\hat{\Abfm}[\omega](\xbfm)} \leq
  \esssup_{\omega \in \Omega} \esssup_{\xbfm \in D} \lambda_{\max} \groupp[\big]{\hat{\Abfm}[\omega](\xbfm)} \leq
  \frac{\overline{\sigma}^d}{\underline{\sigma}^2} .
\end{equation}
Without loss of generality,
we assume $\underline{\sigma} \leq 1 \leq \overline{\sigma}$.

From here on,
we assume that the spatial variable $\xbfm$ and the stochastic 
parameter $\omega$ of the random field have been separated by 
the Karhunen\hyp{}Lo\`eve expansion of $\Vbfm$
coming from the mean field $\Mean[\Vbfm]$
and the covariance $\Cov[\Vbfm]$
yielding a parametrised expansion
\begin{equation}
  \label{eq:KLp}
  \Vbfm[\ybfm](\xbfm)
  = \Mean[\Vbfm](\xbfm) + \sum_{k=1}^{\infty} \sigma_k \psibfm_k(\xbfm) y_k ,
\end{equation}
where $\ybfm = (y_k)_{k \in \Nbbb^*} \in \square \isdef \groups{{-1}, 1}^{\Nbbb^*}$ is
a sequence of uncorrelated random variables, see e.g.\ \cite{HPS16};
we denote the pushforward measure of $\Pbbb$ onto $\square$ as $\Pbbb_\ybfm$.
Thus, we then also view all randomness as being parametrised by $\ybfm$,
i.e.\ $\omega$, $\Omega$ and $\Pbbb$ are replaced by $\ybfm$, $\square$ and $\Pbbb_\ybfm$.

We now impose some common assumptions,
which make the Karhunen\hyp{}Lo\`eve expansion computationally feasible.
\begin{assumption}\label{assumption:Vdecay}
  \begin{enumerate}[(1),noitemsep]
  \item The random variables $(y_k)_{k \in \Nbbb^*}$ are independent and identically distributed.
    Moreover, they are uniformly distributed on $\groups[\big]{{-1}, 1}$.
  \item We assume that the $\psibfm_k$ are elements of $C^{\tau,1}(\overline{D}; \Rbbb^d)$
    and that the sequence $\gammabfm = \groupp{\gamma_k}_{k \in \Nbbb}$,
    given by
    \begin{equation*}
      \gamma_k \isdef \norm[\big]{\sigma_k \psibfm_k}_{C^{\tau,1}(\overline{D}; \Rbbb^d)} ,
    \end{equation*}
    is at least in $\ell^1(\Nbbb)$,
    where we have defined $\psibfm_0 \isdef \Mean[\Vbfm]$ and
    $\sigma_0 \isdef 1$.
    Furthermore, we define
    \begin{equation*}
      c_{\gammabfm} = \max\groupb[\big]{\norm{\gammabfm}_{\ell^1(\Nbbb)}, 1} .
    \end{equation*}
  \end{enumerate}
\end{assumption}
%
\section{Regularity}\label{section:regularity}
%
%
To prove the analyticity of the mapping $\hat{u} \colon \square \to H^{\tau+1}(D)$,
we first investigate the analyticity of the mappings
$\hat{\Abfm} \colon \square \to W^{\tau, \infty}(D; \Rbbb^{d \times d}_{\symm})$ and
$\hat{f} \colon \square \to H^{\tau-1}(D)$ in a first subsection.
Based on that analyticity we then prove the analyticity for $\hat{u}$
in the second subsection.

To make the notation less cumbersome,
since we are considering the norm of spaces of the form
$L_{\Pbbb_\ybfm}^\infty\groupp{\square; \Xcal}$,
we introduce the shorthand notation
\begin{equation*}
  \normt{v}_{\Xcal}
  \isdef \norm{v}_{\infty, \square; \Xcal} .
\end{equation*}
We will especially make use it for spaces of the form
$L_{\Pbbb_\ybfm}^\infty\groupp[\big]{\square; W^{\eta, p}(D; \Xcal)}$,
where this then becomes
$\normt{v}_{\eta, p, D; \Xcal} = \norm{v}_{\infty, \square; \eta, p, D; \Xcal}$.
\subsection{Parametric regularity of the diffusion coefficient and the right-hand side}
To provide regularity estimates for the diffusion coefficient $\hat{\Abfm}$
and the right hand side $\hat{f}$,
that are based on the decay of the expansion of $\Vbfm$
as per Assumption~\ref{assumption:Vdecay},
we first note that we can write
\begin{equation}\label{eq:hatAfcomp}
  \hat{\Abfm}[\omega](\xbfm) = \Tbfm\groupp[\big]{\Vbfm[\omega](\xbfm), \Jbfm[\omega](\xbfm)}
  \quad\text{and}\quad
  \hat{f}[\omega](\xbfm) = s\groupp[\big]{\Vbfm[\omega](\xbfm), \Jbfm[\omega](\xbfm)}
\end{equation}
with
\begin{gather}\label{eq:Tdef}
  \Tbfm \colon \Dcal \times \Rbbb^{d \times d}_{\underline{\sigma}, \overline{\sigma}} \to \Rbbb^{d \times d}_{\symm} ,\, (\vbfm, \Mbfm) \mapsto (\Mbfm^\trans \Mbfm)^{-1} \det\Mbfm \\
  \label{eq:sdef}
  s \colon \Dcal \times \Rbbb^{d \times d}_{\underline{\sigma}, \overline{\sigma}} \to \Rbbb ,\, (\vbfm, \Mbfm) \mapsto f(\vbfm) \det\Mbfm ,
\end{gather}
where $\Rbbb^{d \times d}_{\underline{\sigma}, \overline{\sigma}} \isdef \groupb{\Mbfm \in \Rbbb^{d \times d} \,:\, \underline{\sigma} \leq \sigma_{\min}(\Mbfm) \leq \sigma_{\max}(\Mbfm) \leq \overline{\sigma}}$.
Therefore,
we first discuss the regularity of the combined mapping
\begin{equation*}
  (\Vbfm, \Jbfm) \colon \square \to \groupp[\big]{D \to \Dcal \times \Rbbb^{d \times d}_{\underline{\sigma}, \overline{\sigma}}} ,\,
  \ybfm \mapsto \groupp[\Big]{\xbfm \mapsto \groupp[\big]{\Vbfm[\omega](\xbfm), \Jbfm[\omega](\xbfm)}} ,
\end{equation*}
for which we have the following result.
\begin{lemma}\label{lemma:VJybounds}
  We have for all $\alphabfm \in \Nbbb^{\Nbbb^*}_f$ that
  \begin{equation*}
    \normt[\big]{\pdif_\ybfm^\alphabfm (\Vbfm,\Jbfm)}_{\tau, \infty, D}
    \leq k_{\Vbfm\Jbfm} \gammabfm^\alphabfm ,
  \end{equation*}
  where $k_{\Vbfm\Jbfm} \isdef [1 + (\tau+1) d] c_{\tau} c_{\gammabfm}$.
  Here, $c_{\tau}$ denotes the constant coming from the embedding
  $C^{\tau, 1}(\overline{D}; \Rbbb^d) \hookrightarrow W^{\tau+1, \infty}(D; \Rbbb^d)$.
\end{lemma}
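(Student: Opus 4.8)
The plan is to bound the mixed derivatives $\pdif_\ybfm^\alphabfm(\Vbfm,\Jbfm)$ by exploiting the \emph{affine} (hence polynomial of degree one) dependence of $\Vbfm[\ybfm]$ on the parameter sequence $\ybfm$, as given by the Karhunen--Lo\`eve expansion \eqref{eq:KLp}. Since $\Vbfm[\ybfm](\xbfm) = \psibfm_0(\xbfm) + \sum_{k \geq 1} \sigma_k \psibfm_k(\xbfm) y_k$, the partial derivative in $y_k$ is simply $\sigma_k \psibfm_k$ and all second- and higher-order parametric derivatives vanish. Hence $\pdif_\ybfm^\alphabfm \Vbfm = 0$ unless $\alphabfm = \mathbf{0}$ or $\alphabfm = \ebfm_k$ for some $k$, in which case it equals $\psibfm_0$ or $\sigma_k\psibfm_k$ respectively. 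Differentiating \eqref{eq:jacobian} in $\xbfm$ commutes with the parametric derivatives, so the same holds for $\Jbfm$ with $\psibfm_k$ replaced by $\Dif_\xbfm\psibfm_k$.

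First I would record this vanishing structure explicitly and observe that in every surviving case $\gammabfm^\alphabfm$ equals either $1$ (for $\alphabfm = \mathbf{0}$, using $\sigma_0 = 1$, $\psibfm_0 = \Mean[\Vbfm]$) or $\gamma_k = \norm{\sigma_k\psibfm_k}_{C^{\tau,1}(\overline D;\Rbbb^d)}$ (for $\alphabfm = \ebfm_k$). Next I would estimate $\normt{\pdif_\ybfm^\alphabfm(\Vbfm,\Jbfm)}_{\tau,\infty,D}$ in each case. Because the component $\pdif_\ybfm^\alphabfm\Vbfm$ is deterministic (independent of $\ybfm$), the $L^\infty_{\Pbbb_\ybfm}(\square;\cdot)$ norm is just the value of the $W^{\tau,\infty}(D;\Rbbb^d)$ norm; by the stated embedding $C^{\tau,1}(\overline D;\Rbbb^d)\hookrightarrow W^{\tau+1,\infty}(D;\Rbbb^d)$, this is at most $c_\tau\,\gamma_k$ for the $\Vbfm$ part. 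For the $\Jbfm$ part, each of the $d$ columns is $\pdif_{x_j}$ of the corresponding $\psibfm_k$, so it contributes the $W^{\tau,\infty}$ norm of first spatial derivatives; summing over entries and using that $W^{\tau+1,\infty}$ controls $\tau$ derivatives of the first spatial derivative, I expect a factor $(\tau+1)d\,c_\tau\,\gamma_k$. Combining the $\Vbfm$ and $\Jbfm$ contributions in the product space $D \to \Dcal\times\Rbbb^{d\times d}$ gives the bound $[1 + (\tau+1)d]\,c_\tau\,\gamma_k$, i.e.\ $k_{\Vbfm\Jbfm}\,\gammabfm^{\ebfm_k}$; the case $\alphabfm = \mathbf{0}$ is handled identically with $\gamma_0$ replaced by $1$, which is absorbed by $c_\gammabfm \geq 1$.

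The only real bookkeeping is getting the combinatorial constant right: tracking how the norm on the product space $\Dcal \times \Rbbb^{d\times d}_{\underline\sigma,\overline\sigma}$ decomposes into the $\Vbfm$ and $\Jbfm$ pieces, how the $d\times d$ Jacobian entries contribute the factor $(\tau+1)d$ (the $d$ from counting columns/entries and a $\tau+1$ slack from the embedding which converts the extra spatial derivative in $\Jbfm$ into the gain already present in $W^{\tau+1,\infty}$), and confirming that the constant so obtained is exactly $k_{\Vbfm\Jbfm}$. There is no analytic difficulty here: the affine structure kills all higher-order terms, so unlike the genuinely nonlinear maps $\Tbfm$ and $s$ treated later, no Faà di Bruno / Leibniz machinery is needed for this lemma. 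I expect the main (and essentially only) obstacle to be presenting the constant-chasing cleanly, so that the bound $k_{\Vbfm\Jbfm}\gammabfm^\alphabfm$ — which must also cover $\alphabfm$ with $\norms{\alphabfm}\geq 2$, where both sides could still be compared since the left side is $0$ — holds uniformly in $\alphabfm$.
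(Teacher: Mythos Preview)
Your approach is correct and is exactly the one the paper takes: exploit the affine dependence of $\Vbfm$ (and hence $\Jbfm=\Dif_\xbfm\Vbfm$) on $\ybfm$ so that only $\alphabfm=\mathbf{0}$ and $\alphabfm=\ebfm_k$ survive, then read off the bound from $\gamma_k=\|\sigma_k\psibfm_k\|_{C^{\tau,1}}$ via the embedding. The only spot to tighten is your $\alphabfm=\mathbf{0}$ case: there $(\Vbfm,\Jbfm)$ is \emph{not} deterministic, so you must sum the termwise bounds over all $k\ge 0$ to get $[1+(\tau+1)d]\,c_\tau\,\|\gammabfm\|_{\ell^1}\le k_{\Vbfm\Jbfm}$, which is precisely why the factor $c_{\gammabfm}$ appears in the constant.
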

\begin{proof}
  By definition we have that $\Jbfm[\ybfm] = \Dif_\xbfm \Vbfm[\ybfm]$
  and so it follows that
  \begin{equation*}
    \Vbfm[\ybfm]
    = \sigma_0 \psibfm_0 + \sum_{k=1}^{\infty} \sigma_k \psibfm_k y_k
    \quad\text{and}\quad
    \Jbfm[\ybfm]
    = \sigma_0 \Dif_\xbfm \psibfm_0 + \sum_{k=1}^{\infty} \sigma_k \Dif_\xbfm \psibfm_k y_k .
  \end{equation*}
  From this we can derive that first order derivatives are given by
  \begin{equation*}
    \pdif_{y_i}\Vbfm[\ybfm]
    = \sigma_i \psibfm_i
    \quad\text{and}\quad
    \pdif_{y_i}\Jbfm[\ybfm]
    = \sigma_i \Dif_\xbfm \psibfm_i
  \end{equation*}
  and all higher derivatives vanish.
  Clearly,
  this affine dependence on $\ybfm$ implies the bounds.
\end{proof}

Next, we supply bounds on the derivatives of the mappings $\Tbfm$ and $s$.
\begin{lemma}\label{lemma:Tbounds}
  The mapping $\Tbfm$ is infinitely Fr\'echet differentiable with
  \begin{equation*}
    \norm[\big]{\Dif^r \Tbfm(\vbfm, \Mbfm)}_{\Bcal^r(\Rbbb^d \times \Rbbb^{d \times d}; \Rbbb^{d \times d}_{\symm})} \leq r! k_\Tbfm c_\Tbfm^r
  \end{equation*}
  for all $(\vbfm, \Mbfm) \in \Dcal \times \Rbbb^{d \times d}_{\underline{\sigma}, \overline{\sigma}}$
  with $k_\Tbfm = \underline{\sigma}^{-2} (2 \overline{\sigma})^d $
  and $c_\Tbfm = 4 (\underline{\sigma}^{-2} \overline{\sigma}^2 + 1)$.
\end{lemma}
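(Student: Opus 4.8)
The plan is to exploit that $\Tbfm$ is independent of its first argument $\vbfm$ and is assembled from three elementary building blocks: the quadratic map $q\colon\Mbfm\mapsto\Mbfm^\trans\Mbfm$, the matrix inversion $\iota\colon\Nbfm\mapsto\Nbfm^{-1}$, and the determinant $\delta\colon\Mbfm\mapsto\det\Mbfm$, namely $\Tbfm(\vbfm,\Mbfm)=\iota\bigl(q(\Mbfm)\bigr)\,\delta(\Mbfm)$. On $\Rbbb^{d \times d}_{\underline{\sigma}, \overline{\sigma}}$ every $\Mbfm$ is invertible, hence so is $q(\Mbfm)$, so all three maps --- and therefore $\Tbfm$ --- are infinitely Fr\'echet differentiable (indeed real analytic) wherever $\Mbfm$ is invertible, in particular on $\Dcal\times\Rbbb^{d \times d}_{\underline{\sigma}, \overline{\sigma}}$; and since the $\vbfm$-slot plays no role, the operator norm of $\Dif^r\Tbfm$ on $\Bcal^r(\Rbbb^d\times\Rbbb^{d\times d};\Rbbb^{d\times d}_{\symm})$ equals that of the derivative in the matrix argument alone. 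It then remains to bound the derivatives of each block in the form $r!\,k\,c^{\,r}$ and to combine these through the Leibniz and Fa\`a di Bruno formulas, tracking the constants.

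For $\delta$ this is immediate: $\delta$ is a polynomial of degree $d$, multilinear in the columns of $\Mbfm$, so $\Dif^r\delta\equiv0$ for $r>d$, while for $r\le d$ the form $\Dif^r\delta(\Mbfm)(H_1,\dots,H_r)$ is a sum of $d!/(d-r)!$ determinants obtained from $\Mbfm$ by replacing $r$ columns with the corresponding columns of the $H_i$. Evaluating at $\norm{H_i}_2\le1$ and applying Hadamard's inequality together with $\sigma_{\max}(\Mbfm)\le\overline{\sigma}$ and $\binom{d}{r}\le2^d$ yields $\norm{\Dif^r\delta(\Mbfm)}\le\tfrac{d!}{(d-r)!}\,\overline{\sigma}^{\,d-r}\le r!\,(2\overline{\sigma})^d$, using $\overline{\sigma}\ge1$; so $\delta$ obeys the desired bound with constants $(2\overline{\sigma})^d$ and $1$. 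The map $q$ is quadratic, so $\Dif^rq\equiv0$ for $r\ge3$, $\norm{\Dif q(\Mbfm)}\le2\overline{\sigma}$ and $\norm{\Dif^2q}\le2$, and it obeys the bound for $r\ge1$ with constants $1$ and $2\overline{\sigma}$.

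The only substantive step is $\iota$. I would use the explicit derivative formula $\Dif^r\iota(\Nbfm)(H_1,\dots,H_r)=(-1)^r\sum_{\sigma\in S_r}\Nbfm^{-1}H_{\sigma(1)}\Nbfm^{-1}\cdots\Nbfm^{-1}H_{\sigma(r)}\Nbfm^{-1}$, which immediately gives $\norm{\Dif^r\iota(\Nbfm)}\le r!\,\norm{\Nbfm^{-1}}_2^{\,r+1}$; since the singular values of $\Mbfm\in\Rbbb^{d \times d}_{\underline{\sigma}, \overline{\sigma}}$ lie in $[\underline{\sigma},\overline{\sigma}]$, the eigenvalues of $q(\Mbfm)=\Mbfm^\trans\Mbfm$ lie in $[\underline{\sigma}^2,\overline{\sigma}^2]$, whence $\norm{q(\Mbfm)^{-1}}_2\le\underline{\sigma}^{-2}$ and so $\norm{\Dif^r\iota(q(\Mbfm))}\le r!\,\underline{\sigma}^{-2}\,(\underline{\sigma}^{-2})^r$. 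Inserting the bounds for $q$ and for $\iota$ at $q(\Mbfm)$ into the Fa\`a di Bruno formula for $\Dif^r(\iota\circ q)$ and estimating the arising sum over set partitions by the corresponding Lah-number identity then yields $\norm{\Dif^r(\iota\circ q)(\Mbfm)}\le r!\,\underline{\sigma}^{-2}\,c_g^{\,r}$ for all $r\ge0$, with $c_g\le2\overline{\sigma}(1+\underline{\sigma}^{-2})$; here one keeps the prefactor $\underline{\sigma}^{-2}$ rather than the slightly smaller one Fa\`a di Bruno produces, so that the estimate holds already at $r=0$, which is needed next.

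Finally, scalar--matrix multiplication is a norm-one bilinear map, so the Leibniz rule gives $\norm{\Dif^r\Tbfm(\Mbfm)}\le\sum_{j=0}^r\binom{r}{j}\norm{\Dif^j(\iota\circ q)(\Mbfm)}\,\norm{\Dif^{r-j}\delta(\Mbfm)}\le\sum_{j=0}^r\binom{r}{j}\,j!\,\underline{\sigma}^{-2}c_g^{\,j}\,(r-j)!\,(2\overline{\sigma})^d=r!\,\underline{\sigma}^{-2}(2\overline{\sigma})^d\sum_{j=0}^rc_g^{\,j}\le r!\,\underline{\sigma}^{-2}(2\overline{\sigma})^d(1+c_g)^r$, using $\sum_{j=0}^rc_g^{\,j}\le\sum_{j=0}^r\binom{r}{j}c_g^{\,j}=(1+c_g)^r$. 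This already exhibits the asserted prefactor $k_\Tbfm=\underline{\sigma}^{-2}(2\overline{\sigma})^d$, and a one-line estimate using $\underline{\sigma}\le1\le\overline{\sigma}$ gives $1+c_g\le1+2\overline{\sigma}(1+\underline{\sigma}^{-2})\le4(\underline{\sigma}^{-2}\overline{\sigma}^2+1)=c_\Tbfm$, which finishes the argument. The crux is thus the control of the inversion term and its propagation through the composition; the determinant factor and the inertness of the $\vbfm$-slot contribute nothing of substance, and the only delicate bookkeeping point is that the composite bound for $\iota\circ q$ must be valid down to $r=0$.
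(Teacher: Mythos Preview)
Your argument is correct and follows essentially the same route as the paper: the same decomposition of $\Tbfm$ into the quadratic map $\Mbfm\mapsto\Mbfm^\trans\Mbfm$, matrix inversion, and determinant, the same explicit derivative formula for the inverse, and the same Hadamard bound for the determinant. The only stylistic difference is that you carry out the Leibniz and Fa\`a di Bruno combinatorics by hand (via the Lah-number identity), whereas the paper packages these as black-box lemmas cited from \cite{HS19}; your slightly different choice of constants for $q$ (namely $k_q=1$, $c_q=2\overline{\sigma}$ versus the paper's $k_3=\overline{\sigma}^2$, $c_3=2$) is immaterial and leads to the same final $k_\Tbfm$ and $c_\Tbfm$.
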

\begin{proof}
  We start with the mappings
  \begin{gather*}
    \Tbfm_1 \colon \Dcal \times \Rbbb^{d \times d}_{\underline{\sigma}, \overline{\sigma}} \to \Rbbb^{d \times d}_{\underline{\sigma}, \overline{\sigma}} ,\, (\vbfm, \Mbfm) \mapsto \Mbfm
    \quad\text{and}\quad
    \Tbfm_2 \colon \Dcal \times \Rbbb^{d \times d}_{\underline{\sigma}, \overline{\sigma}} \to \Rbbb^{d \times d}_{\underline{\sigma}, \overline{\sigma}} ,\, (\vbfm, \Mbfm) \mapsto \Mbfm^\trans ,
  \end{gather*}
  which are infinitely Fr\'echet differentiable with
  \begin{equation*}
    \norm[\big]{\Dif^r \Tbfm_i(\vbfm, \Mbfm)}_{\Bcal^r(\Rbbb^d \times \Rbbb^{d \times d}; \Rbbb^{d \times d})} \leq r! k_i c_i^r
  \end{equation*}
  for all $(\vbfm, \Mbfm) \in \Dcal \times \Rbbb^{d \times d}_{\underline{\sigma}, \overline{\sigma}}$,
  $i = 1, 2$ and $k_1 = k_2 = \overline{\sigma}$, $c_1 = c_2 = 1$.
  Then,
  using \cite[Lemma 3]{HS19} we see,
  that the mapping
  \begin{equation*}
    \Tbfm_3 \colon \Dcal \times \Rbbb^{d \times d}_{\underline{\sigma}, \overline{\sigma}} \to \Rbbb^{d \times d}_{\underline{\sigma}^2, \overline{\sigma}^2} ,\, (\vbfm, \Mbfm) \mapsto \Mbfm^\trans \Mbfm
  \end{equation*}
  is infinitely Fr\'echet differentiable with
  \begin{equation*}
    \norm[\big]{\Dif^r \Tbfm_3(\vbfm, \Mbfm)}_{\Bcal^r(\Rbbb^d \times \Rbbb^{d \times d}; \Rbbb^{d \times d})} \leq r! k_3 c_3^r
  \end{equation*}
  for all $(\vbfm, \Mbfm) \in \Dcal \times \Rbbb^{d \times d}_{\underline{\sigma}, \overline{\sigma}}$,
  and $k_3 = \overline{\sigma}^2$, $c_3 = 2$.

  Next,
  we consider the mapping
  \begin{equation*}
    \Minv \colon \Rbbb^{d \times d}_{\underline{\sigma}^2, \overline{\sigma}^2} \to \Rbbb^{d \times d}_{\overline{\sigma}^{-2}, \underline{\sigma}^{-2}} ,\, \Mbfm \mapsto \Mbfm^{-1} .
  \end{equation*}
  Clearly,
  the $r$-th Fr\'echet derivative of $\Minv$ is given by
  \begin{align*}
    \Dif^r \Minv(\Mbfm)\Hbfm_1 \cdots \Hbfm_t
    &= (-1)^r \sum_{\sigma \in S_r} \Mbfm^{-1} \prod_{j=1}^{r} \groupp[\big]{\Hbfm_{\sigma(j)} \Mbfm^{-1}}\\
    &= (-1)^r \sum_{\sigma \in S_r} \Minv(\Mbfm) \prod_{j=1}^{r} \groupp[\big]{\Hbfm_{\sigma(j)} \Minv(\Mbfm)} ,
  \end{align*}
  where $S_r$ is the set of all bijections on the set $\groupb{1, 2, \cdots, r}$.
  Thus,
  we have
  \begin{equation*}
    \norm[\big]{\Dif^r \Minv(\Mbfm)}_{\Bcal^{r}(\Rbbb^{d \times d}; \Rbbb^{d \times d})}
    \leq r! \norm[\big]{\Minv(\Mbfm)}_2^{r+1}
    \leq r! k_{\Minv} c_{\Minv}^r
  \end{equation*}
  for all $\Mbfm \in \Rbbb^{d \times d}_{\underline{\sigma}^2, \overline{\sigma}^2} \to \Rbbb^{d \times d}_{\overline{\sigma}^{-2}, \underline{\sigma}^{-2}}$
  with $k_{\Minv} = c_{\Minv} = \underline{\sigma}^{-2}$.
  Therefore,
  we can use \cite[Lemma 4]{HS19} to see,
  that the mapping
  \begin{equation*}
    \Tbfm_4 \colon \Dcal \times \Rbbb^{d \times d}_{\underline{\sigma}, \overline{\sigma}} \to \Rbbb^{d \times d}_{\overline{\sigma}^{-2}, \underline{\sigma}^{-2}} ,\, (\vbfm, \Mbfm) \mapsto \Minv\groupp[\big]{\Tbfm_3(\vbfm, \Mbfm)} = (\Mbfm^\trans \Mbfm)^{-1}
  \end{equation*}
  is infinitely Fr\'echet differentiable with
  \begin{equation*}
    \norm[\big]{\Dif^r \Tbfm_4(\vbfm, \Mbfm)}_{\Bcal^r(\Rbbb^d \times \Rbbb^{d \times d}; \Rbbb^{d \times d})} \leq r! k_4 c_4^r
  \end{equation*}
  for all $(\vbfm, \Mbfm) \in \Dcal \times \Rbbb^{d \times d}_{\underline{\sigma}, \overline{\sigma}}$,
  and $k_4 = \underline{\sigma}^{-2}$, $c_4 = (\underline{\sigma}^{-2} \overline{\sigma}^2 + 1) 2$.

  Finally,
  we consider the mapping
  \begin{equation*}
    \Det \colon \Rbbb^{d \times d}_{\underline{\sigma}, \overline{\sigma}} \to \Rbbb ,\, \Mbfm \mapsto \Det\Mbfm ,
  \end{equation*}
  which has the $r$-th Fr\'echet derivative of $\Det$ given by
  \begin{equation*}
    \Dif^r \Det(\Mbfm)\Hbfm_1 \cdots \Hbfm_t
    = \sum_{\substack{1 \leq i_1, \ldots, i_r \leq d\\\text{p.w. inequal}}}
    \det\groupp[\big]{\Mbfm_{[i_1, \Hbfm_1], \ldots, [i_r, \Hbfm_r]}} ,
  \end{equation*}
  where $\Mbfm_{[i_1, \Hbfm_1], \ldots, [i_t, \Hbfm_t]}$ denotes the matrix $\Mbfm$
  whose $i_k$-th column is replaced by the $i_k$-th column of the matrix $\Hbfm_k$
  for all $k$ from $1$ to $r$.
  Now, since we can bound the determinant of a matrix by the product of the norms of its columns, i.e.
  \begin{equation*}
    \norms[\Big]{\Det\groupp[\Big]{\begin{bmatrix} \zbfm_1 & \cdots & \zbfm_d\end{bmatrix}}}
    \leq \prod_{j=1}^{d} \norm{\zbfm_j} ,
  \end{equation*}
  and since we know that
  \begin{equation*}
    \norm{\zbfm_j}
    \leq \norm[\Big]{\begin{bmatrix} \zbfm_1 & \cdots & \zbfm_d\end{bmatrix}} .
  \end{equation*}
  it follows that,
  \begin{equation*}
    \norm[\big]{\Dif^r \Det(\Mbfm)}_{\Bcal^{r}(\Rbbb^{d \times d}; \Rbbb)}
    \leq \frac{d!}{(d-r)!} \norm{\Mbfm}^{d-r}
    \leq r! \binom{d}{r} \overline{\sigma}^d
    \leq r! k_{\Det} c_{\Det}^r ,
  \end{equation*}
  with $k_{\Det} = (2 \overline{\sigma})^d$ and $c_{\Det} = 1$.
  As before,
  we can use \cite[Lemma 4]{HS19} to see,
  that the mapping
  \begin{equation*}
    T_5 \colon \Dcal \times \Rbbb^{d \times d}_{\underline{\sigma}, \overline{\sigma}} \to \Rbbb ,\, (\vbfm, \Mbfm) \mapsto \Det\groupp[\big]{\Tbfm_1(\vbfm, \Mbfm)} = \Det\Mbfm
  \end{equation*}
  is infinitely Fr\'echet differentiable with
  \begin{equation*}
    \norm[\big]{\Dif^r T_5(\vbfm, \Mbfm)}_{\Bcal^r(\Rbbb^d \times \Rbbb^{d \times d}; \Rbbb)} \leq r! k_5 c_5^r
  \end{equation*}
  for all $(\vbfm, \Mbfm) \in \Dcal \times \Rbbb^{d \times d}_{\underline{\sigma}, \overline{\sigma}}$,
  and $k_5 = (2 \overline{\sigma})^d$, $c_5 = \overline{\sigma} + 1$.

  Finally,
  the use of \cite[Lemma 4]{HS19} yields the assertion,
  as $\Tbfm(\vbfm, \Mbfm) = T_5(\vbfm, \Mbfm) \Tbfm_4(\vbfm, \Mbfm)$.
\end{proof}

\begin{lemma}\label{lemma:sbounds}
  The mapping $s$ is infinitely Fr\'echet differentiable with
  \begin{equation*}
    \norm[\big]{\Dif^r s(\vbfm, \Mbfm)}_{\Bcal^r(\Rbbb^d \times \Rbbb^{d \times d}; \Rbbb)} \leq r! k_s c_s^r
  \end{equation*}
  for all $(\vbfm, \Mbfm) \in \Dcal \times \Rbbb^{d \times d}_{\underline{\sigma}, \overline{\sigma}}$
  with $k_s = (2 \overline{\sigma})^d k_f$
  and $c_s = 2 \max\groupb[\big]{c_f \displaystyle\max_{\xbfm \in \Dcal} \norm{\xbfm}, \overline{\sigma}} + 2$,
  where $k_f$, $c_f$ are constants such that $\norm[\big]{\Dif^r f(\vbfm)}_{\Bcal^r(\Rbbb^d; \Rbbb)} \leq r! k_f c_f^r$ holds for all $(\vbfm, \Mbfm) \in \Dcal \times \Rbbb^{d \times d}_{\underline{\sigma}, \overline{\sigma}}$.
\end{lemma}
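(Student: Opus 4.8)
The plan is to mimic the proof of Lemma~\ref{lemma:Tbounds} almost verbatim: write $s$ as a product of two maps whose Fr\'echet derivatives are already under control, and then combine the resulting bounds via \cite[Lemma 4]{HS19}, the same combination lemma (for compositions and for products) that was used repeatedly there. Since $s(\vbfm, \Mbfm) = f(\vbfm) \cdot \det\Mbfm$, it suffices to bound the Fr\'echet derivatives of the two scalar-valued maps $(\vbfm, \Mbfm) \mapsto f(\vbfm)$ and $(\vbfm, \Mbfm) \mapsto \det\Mbfm$ separately and then multiply.

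For the second factor nothing new is needed: the map $(\vbfm, \Mbfm) \mapsto \det\Mbfm$ is precisely the map $T_5$ appearing in the proof of Lemma~\ref{lemma:Tbounds}, which was shown there to be infinitely Fr\'echet differentiable with $\norm[\big]{\Dif^r T_5(\vbfm, \Mbfm)} \leq r! k_5 c_5^r$, where $k_5 = (2 \overline{\sigma})^d$ and $c_5 = \overline{\sigma} + 1$. For the first factor, I would write $(\vbfm, \Mbfm) \mapsto f(\vbfm)$ as the composition of $f$ with the linear projection $\pi \colon \Dcal \times \Rbbb^{d \times d}_{\underline{\sigma}, \overline{\sigma}} \to \Dcal$, $(\vbfm, \Mbfm) \mapsto \vbfm$. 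Being linear, $\pi$ trivially obeys $\norm[\big]{\Dif^r \pi} \leq r! \max\groupb[\big]{\max_{\xbfm \in \Dcal} \norm{\xbfm}, 1} \cdot 1^r$ --- the value $\pi(\vbfm, \Mbfm) = \vbfm$ controls the $r = 0$ term, $\Dif\pi = \pi$ has norm at most $1$, and all Fr\'echet derivatives of order $\geq 2$ vanish --- and $\max_{\xbfm \in \Dcal}\norm{\xbfm}$ is finite because the hold-all domain $\Dcal$ is bounded. Feeding this together with the assumed bound $\norm[\big]{\Dif^r f(\vbfm)} \leq r! k_f c_f^r$ into \cite[Lemma 4]{HS19}, applied to the composition $f \circ \pi$, shows that $(\vbfm, \Mbfm) \mapsto f(\vbfm)$ is infinitely Fr\'echet differentiable with derivative norms bounded by $r! k_f' (c_f')^r$, where $k_f'$ and $c_f'$ are constants controlled by $k_f$, $c_f$ and $\max_{\xbfm \in \Dcal}\norm{\xbfm}$.

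Finally, $s$ is the pointwise product of the two $\Rbbb$-valued maps just discussed, so one last application of \cite[Lemma 4]{HS19} --- exactly as it was used at the end of the proof of Lemma~\ref{lemma:Tbounds} for $\Tbfm = T_5 \cdot \Tbfm_4$ --- shows that $s$ is infinitely Fr\'echet differentiable and yields the stated estimate with $k_s = (2 \overline{\sigma})^d k_f$ and $c_s = 2 \max\groupb[\big]{c_f \max_{\xbfm \in \Dcal}\norm{\xbfm}, \overline{\sigma}} + 2$. As every step is either a verbatim reuse of material from the proof of Lemma~\ref{lemma:Tbounds} or a direct invocation of \cite[Lemma 4]{HS19}, there is no genuine analytic difficulty; the only point requiring attention is the bookkeeping, namely tracking how the multiplicative constants propagate through the two applications of \cite[Lemma 4]{HS19} so that they collapse to exactly the $k_s$ and $c_s$ claimed in the statement.
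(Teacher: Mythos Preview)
Your proposal is correct and follows essentially the same approach as the paper: the paper likewise writes $s = s_2 \cdot s_3$ with $s_2 = f \circ \sbfm_1$ (your $f \circ \pi$) and $s_3 = T_5$, bounds $\sbfm_1$ with $k_1 = \max_{\xbfm \in \Dcal}\norm{\xbfm}$, $c_1 = 1$, then applies \cite[Lemma~4]{HS19} first to the composition $f \circ \sbfm_1$ and then to the product $s_2 \cdot s_3$. The only cosmetic difference is that you take $\max\{\max_{\xbfm \in \Dcal}\norm{\xbfm},1\}$ for the projection's constant, which is arguably more careful but does not change the final outcome.
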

\begin{proof}
  We start with the mapping
  \begin{gather*}
    \sbfm_1 \colon \Dcal \times \Rbbb^{d \times d}_{\underline{\sigma}, \overline{\sigma}} \to \Dcal ,\, (\vbfm, \Mbfm) \mapsto \vbfm ,
  \end{gather*}
  which is infinitely Fr\'echet differentiable with
  \begin{equation*}
    \norm[\big]{\Dif^r \sbfm_1(\vbfm, \Mbfm)}_{\Bcal^r(\Rbbb^d \times \Rbbb^{d \times d}; \Rbbb^{d \times d})} \leq r! k_1 c_1^r
  \end{equation*}
  for all $(\vbfm, \Mbfm) \in \Dcal \times \Rbbb^{d \times d}_{\underline{\sigma}, \overline{\sigma}}$,
  and $k_1 = \displaystyle\max_{\xbfm \in \Dcal} \norm{\xbfm}$, $c_1 = 1$.
  Then,
  using \cite[Lemma 4]{HS19} we see,
  that the mapping
  \begin{equation*}
    s_2 \colon \Dcal \times \Rbbb^{d \times d}_{\underline{\sigma}, \overline{\sigma}} \to \Rbbb ,\, (\vbfm, \Mbfm) \mapsto f\groupp[\big]{\sbfm_1(\vbfm, \Mbfm)} = f(\vbfm)
  \end{equation*}
  is infinitely Fr\'echet differentiable with
  \begin{equation*}
    \norm[\big]{\Dif^r s_2(\vbfm, \Mbfm)}_{\Bcal^r(\Rbbb^d \times \Rbbb^{d \times d}; \Rbbb)} \leq r! k_2 c_2^r
  \end{equation*}
  for all $(\vbfm, \Mbfm) \in \Dcal \times \Rbbb^{d \times d}_{\underline{\sigma}, \overline{\sigma}}$,
  and $k_2 = k_f$, $c_2 = c_f \displaystyle\max_{\xbfm \in \Dcal} \norm{\xbfm} + 1$.

  Moreover,
  as shown in the previous proof we also have that
  \begin{equation*}
    s_3 \colon \Dcal \times \Rbbb^{d \times d}_{\underline{\sigma}, \overline{\sigma}} \to \Rbbb ,\, (\vbfm, \Mbfm) \mapsto \Det\Mbfm
  \end{equation*}
  is infinitely Fr\'echet differentiable with
  \begin{equation*}
    \norm[\big]{\Dif^r s_3(\vbfm, \Mbfm)}_{\Bcal^r(\Rbbb^d \times \Rbbb^{d \times d}; \Rbbb)} \leq r! k_3 c_3^r
  \end{equation*}
  for all $(\vbfm, \Mbfm) \in \Dcal \times \Rbbb^{d \times d}_{\underline{\sigma}, \overline{\sigma}}$,
  and $k_3 = (2 \overline{\sigma})^d$, $c_3 = \overline{\sigma} + 1$.
  Lastly,
  the use of \cite[Lemma 4]{HS19} yields the assertion,
  as $s(\vbfm, \Mbfm) = s_2(\vbfm, \Mbfm) s_3(\vbfm, \Mbfm)$.
\end{proof}

Now,
these results enable us to show the following regularity estimates
for the diffusion coefficient $\hat{\Abfm}$ and the right hand side $\hat{f}$.
\begin{theorem}\label{theorem:Afybounds}
  We know for all $\alphabfm \in \Nbbb^{\Nbbb^*}_f$ that
  \begin{equation*}
    \normt[\big]{\pdif_\ybfm^\alphabfm \hat{\Abfm}}_{\tau, \infty, D; \Rbbb_\symm^{d \times d}}
    \leq \norms{\alphabfm}! k_{\hat{\Abfm}} c_{\hat{\Abfm}}^{\norms{\alphabfm}} \gammabfm^{\alphabfm}
    \quad\text{and}\quad
    \normt[\big]{\pdif_\ybfm^\alphabfm \hat{f}}_{\tau-1, 2, D; \Rbbb}
    \leq \norms{\alphabfm}! k_{\hat{f}} c_{\hat{f}}^{\norms{\alphabfm}} \gammabfm^{\alphabfm} ,
  \end{equation*}
  where
  \begin{equation*}
    k_{\hat{\Abfm}} \isdef k_\Tbfm \sum_{r=0}^{\tau} c_\Tbfm^r k_{\Vbfm\Jbfm}^r
    \, , \quad
    c_{\hat{\Abfm}} \isdef 2 c_\Tbfm k_{\Vbfm\Jbfm} + 1
    \, , \quad
    k_{\hat{f}} \isdef \sqrt{\frac{\norms{\Dcal}}{\underline{\sigma}^{d}}} k_s \sum_{r=0}^{\tau} c_s^r k_{\Vbfm\Jbfm}^r
    \quad\text{and}\quad
    c_{\hat{f}} \isdef 2 c_s k_{\Vbfm\Jbfm} + 1 .
  \end{equation*}
\end{theorem}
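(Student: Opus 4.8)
The plan is to establish both bounds by the same mechanism: each of $\hat{\Abfm}$ and $\hat{f}$ is a composition of an affine (in $\ybfm$) inner map $(\Vbfm,\Jbfm)$ with a smooth outer map ($\Tbfm$ resp.\ $s$), so the parametric derivatives can be controlled by combining the Faà~di~Bruno–type composition bound with the estimates already proven. First I would fix $\alphabfm \in \Nbbb^{\Nbbb^*}_f$ and recall from \eqref{eq:hatAfcomp} that pointwise in $\xbfm$ we have $\hat{\Abfm}[\ybfm](\xbfm) = \Tbfm\bigl((\Vbfm,\Jbfm)[\ybfm](\xbfm)\bigr)$. Since $(\Vbfm,\Jbfm)$ depends affinely on $\ybfm$ (Lemma~\ref{lemma:VJybounds}, whose proof shows the only nonzero derivatives are of order $0$ and $1$), the Faà~di~Bruno formula for $\pdif_\ybfm^\alphabfm$ of the composition collapses: the sum over set partitions reduces to the single partition of $\supp\alphabfm$ into the singletons dictated by $\alphabfm$, contributing one factor $\Dif^r\Tbfm$ evaluated at $(\Vbfm,\Jbfm)[\ybfm]$ with $r = \norms{\alphabfm}$ if $\alphabfm$ is a $0/1$ multi-index, and more generally one must also account for the order-$0$ term $\psibfm_0$ inside the argument and the fact that higher powers $\alpha_k \ge 2$ are possible — but each $\pdif_{y_k}(\Vbfm,\Jbfm)$ is itself just $\sigma_k(\psibfm_k,\Dif_\xbfm\psibfm_k)$, so only products of these first-order pieces appear.

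Next I would pass from the Fréchet bound on $\Tbfm$ (Lemma~\ref{lemma:Tbounds}) to a bound in the $W^{\tau,\infty}(D;\Rbbb^{d\times d}_\symm)$ norm. The point is that differentiating $\xbfm \mapsto \Tbfm\bigl((\Vbfm,\Jbfm)[\ybfm](\xbfm)\bigr)$ up to spatial order $\tau$ again produces, via the chain rule, terms of the form $\Dif^j\Tbfm$ applied to spatial derivatives of $(\Vbfm,\Jbfm)[\ybfm]$ of orders summing appropriately; here the key input is Lemma~\ref{lemma:VJybounds}, which bounds $\normt{\pdif_\ybfm^\alphabfm(\Vbfm,\Jbfm)}_{\tau,\infty,D}$ by $k_{\Vbfm\Jbfm}\gammabfm^\alphabfm$, so all spatial derivatives of the inner map are uniformly controlled on $\square$ by $k_{\Vbfm\Jbfm}$ (the $\gammabfm^\alphabfm$ only appearing when a $\ybfm$-derivative has been taken). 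Combining the spatial chain rule with the parametric one — i.e.\ treating the mixed $\xbfm$- and $\ybfm$-differentiation together as differentiation of a composition — and collecting constants, the count of terms is absorbed into the geometric factor $c_{\hat\Abfm}^{\norms{\alphabfm}}$; the leading $\norms{\alphabfm}!$ comes from the $r!$ in Lemma~\ref{lemma:Tbounds} with $r$ up to $\norms{\alphabfm}$, and the prefactor $k_{\hat\Abfm} = k_\Tbfm\sum_{r=0}^\tau c_\Tbfm^r k_{\Vbfm\Jbfm}^r$ is exactly the sum over the spatial derivative order $r \in \{0,\dots,\tau\}$ of the worst-case contribution $k_\Tbfm(c_\Tbfm k_{\Vbfm\Jbfm})^r$. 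This yields the first inequality.

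For $\hat{f}$ the argument is identical with $\Tbfm$ replaced by $s$ and Lemma~\ref{lemma:Tbounds} by Lemma~\ref{lemma:sbounds}, producing a bound in $W^{\tau,\infty}(D)$; one then converts this to the claimed $H^{\tau-1}(D) = W^{\tau-1,2}(D)$ bound via the continuous embedding $W^{\tau,\infty}(D) \hookrightarrow W^{\tau-1,2}(D)$ on the bounded domain $D$, which costs only a factor controlled by $\sqrt{\norms{D}}$; the appearance of $\sqrt{\norms{\Dcal}/\underline\sigma^d}$ in $k_{\hat f}$ reflects estimating $\norms{D} \le \norms{\Dcal}/\underline\sigma^d$ using $\Dfrak[\ybfm]\subset\Dcal$ together with the lower bi-Lipschitz bound $\underline\sigma$ on $\Vbfm[\ybfm]$ (so $\norms{D} = \norms{\Vbfm[\ybfm]^{-1}(\Dfrak[\ybfm])} \le \underline\sigma^{-d}\norms{\Dfrak[\ybfm]} \le \underline\sigma^{-d}\norms{\Dcal}$). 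The main obstacle is purely bookkeeping: organising the simultaneous spatial/parametric chain rule so that the combinatorial multiplicities, the factorials from Lemma~\ref{lemma:Tbounds}/\ref{lemma:sbounds}, and the $\tau$-fold sum over spatial derivative orders land exactly on the stated constants $k_{\hat\Abfm}, c_{\hat\Abfm}, k_{\hat f}, c_{\hat f}$ — there is no analytic difficulty once Lemmas~\ref{lemma:VJybounds}, \ref{lemma:Tbounds} and \ref{lemma:sbounds} are in hand, since the affine structure of $(\Vbfm,\Jbfm)$ in $\ybfm$ is what makes the composition estimate tractable.
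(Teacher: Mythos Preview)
Your plan is correct and matches the paper's approach: both exploit the composition $\hat{\Abfm} = \Tbfm \circ (\Vbfm,\Jbfm)$ (resp.\ $\hat{f} = s \circ (\Vbfm,\Jbfm)$), combine the Fr\'echet bounds of Lemmas~\ref{lemma:Tbounds}/\ref{lemma:sbounds} with the $W^{\tau,\infty}$ control of Lemma~\ref{lemma:VJybounds} via a mixed spatial--parametric Fa\`a~di~Bruno estimate, and for $\hat{f}$ pass to $L^2$ using $\norms{D}\le\underline\sigma^{-d}\norms{\Dcal}$. The only cosmetic difference is that the paper invokes a ready-made composition lemma (\cite[Lemma~8]{HS19}) and keeps the full sum over compositions $C(\alphabfm,s)$ rather than exploiting the collapse you note from the affine $\ybfm$-dependence --- this is why the constant $c_{\hat{\Abfm}} = 2c_\Tbfm k_{\Vbfm\Jbfm}+1$ arises from a binomial sum over $s$, whereas your collapse would in principle yield a sharper constant.
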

\begin{proof}
  Because $\hat{\Abfm} = \Tbfm \circ (\Vbfm, \Jbfm)$,
  we can employ \cite[Lemma 8]{HS19}
  to arrive at
  \begin{align*}
    \normt{\hat{\Abfm}}_{\tau, \infty, D; \Rbbb_\symm^{d \times d}}
    &\leq \sum_{r=0}^{\tau} \frac{1}{r!}
    \normt[\big]{\Dif^{r} \Tbfm \circ (\Vbfm, \Jbfm)}_{\infty, D; \Bcal^{r}(\Rbbb^d \times \Rbbb^{d \times d}; \Rbbb_\symm^{d \times d}))}
    \normt{(\Vbfm, \Jbfm)}_{\eta, \tau, D; \Rbbb^d \times \Rbbb^{d \times d}}^r \\
    &\leq k_\Tbfm \sum_{r=0}^{\tau} c_\Tbfm^r k_{\Vbfm\Jbfm}^r
    \leq k_{\hat{\Abfm}}
  \end{align*}
  as well as, for $\alphabfm \neq \zerobfm$,
  \begin{align*}
    &\normt[\big]{\pdif_\ybfm^\alphabfm \hat{\Abfm}}_{\tau, \infty, D; \Rbbb_\symm^{d \times d}} \\
    &\quad \leq \alphabfm!
    \sum_{s=1}^{\norms{\alphabfm}} \frac{1}{s!}
    \groupp[\bigg]{
      \sum_{r=0}^{\tau} \frac{1}{r!}
      \normt[\big]{\Dif^{r+s} \Tbfm \circ (\Vbfm, \Jbfm)}_{\infty, D; \Bcal^{r+s}(\Rbbb^d \times \Rbbb^{d \times d}; \Rbbb_\symm^{d \times d})}
      \normt{(\Vbfm, \Jbfm)}_{\tau, \infty, D; \Rbbb^d \times \Rbbb^{d \times d}}^r} \\
    &\quad \qquad \qquad \sum_{C(\alphabfm, s)}
    \prod_{j=1}^s \frac{1}{\betabfm_j!}
    \normt[\big]{\pdif_\ybfm^{\betabfm_j} (\Vbfm, \Jbfm)}_{\tau, \infty, D; \Rbbb^d \times \Rbbb^{d \times d}} \\
    &\quad \leq \alphabfm!
    \sum_{s=1}^{\norms{\alphabfm}} \frac{1}{s!}
    \groupp[\bigg]{
      \sum_{r=0}^{\tau} \frac{1}{r!}
      (r+s)! k_\Tbfm c_\Tbfm^{r+s} k_{\Vbfm\Jbfm}^r}
    \sum_{C(\alphabfm, s)}
    \prod_{j=1}^s \frac{1}{\betabfm_j!}
    k_{\Vbfm\Jbfm} \gammabfm^{\betabfm_j} \\
    &\quad \leq \gammabfm^{\alphabfm}
    k_\Tbfm
    \groupp[\bigg]{
      \sum_{r=0}^{\tau} 2^r c_\Tbfm^r k_{\Vbfm\Jbfm}^r}
    \sum_{s=1}^{\norms{\alphabfm}}
    2^s c_\Tbfm^s k_{\Vbfm\Jbfm}^s
    \alphabfm!
    \sum_{C(\alphabfm, s)}
    \prod_{j=1}^s \frac{1}{\betabfm_j!} \\
    &\quad \leq \gammabfm^{\alphabfm} \norms{\alphabfm}!
    k_\Tbfm
    \groupp[\bigg]{
      \sum_{r=0}^{\tau} 2^r c_\Tbfm^r k_{\Vbfm\Jbfm}^r}
    \sum_{s=1}^{\norms{\alphabfm}}
    2^s c_\Tbfm^s k_{\Vbfm\Jbfm}^s
    \binom{\norms{\alphabfm} - 1}{s - 1} \\
    &\quad \leq \gammabfm^{\alphabfm} \norms{\alphabfm}!
    k_\Tbfm
    \groupp[\bigg]{
      \sum_{r=0}^{\tau} 2^r c_\Tbfm^r k_{\Vbfm\Jbfm}^r}
    (2 c_\Tbfm k_{\Vbfm\Jbfm} + 1)^{\norms{\alphabfm}} ,
  \end{align*}
  where $C(\alphabfm, s)$ is the set of all compositions of the multi\hyp{}index $\alphabfm$
  into $s$ non\hyp{}vanishing multi\hyp{}indices $\betabfm_1, \ldots, \betabfm_s$,
  see \cite{HS19},
  and we make use of the combinatorial identity
  \begin{equation*}
    \alphabfm!
    \sum_{C(\alphabfm, s)}
    \prod_{j=1}^s \frac{\norms{\betabfm_j}!}{\betabfm_j!}
    = \norms{\alphabfm}! \binom{\norms{\alphabfm} - 1}{s - 1} .
  \end{equation*}
  This proves the assertion for $\hat{\Abfm}$,
  while the assertion for $\hat{f}$ follows analogously
  after remarking that
  \begin{align*}
    \normt[\big]{\Dif^{t} s \circ (\Vbfm, \Jbfm)}_{2, D; \Bcal^{t}(\Rbbb^d \times \Rbbb^{d \times d}; \Rbbb)}
    &= \esssup_{\ybfm \in \square}
    \norm[\big]{\Dif^t s \circ (\Vbfm[\ybfm], \Jbfm[\ybfm])}_{2, D; \Bcal^{t}(\Rbbb^d \times \Rbbb^{d \times d}; \Rbbb)} \\
    &\leq \esssup_{\ybfm \in \square}
    \norm[\big]{\Dif^t s}_{2, \Dfrak[\ybfm]; \Bcal^{t}(\Rbbb^d \times \Rbbb^{d \times d}; \Rbbb)}
    \sqrt{\underline{\sigma}^{-d}} \\
    &\leq \esssup_{\ybfm \in \square}
    \norm[\big]{\Dif^t s}_{\infty, \Dfrak[\ybfm]; \Bcal^{t}(\Rbbb^d \times \Rbbb^{d \times d}; \Rbbb)}
    \sqrt{\norms{\Dfrak[\ybfm]} \underline{\sigma}^{-d}} \\
    &\leq 
    \norm[\big]{\Dif^t s}_{\infty, \Dcal; \Bcal^{t}(\Rbbb^d \times \Rbbb^{d \times d}; \Rbbb)}
    \sqrt{\norms{\Dcal} \underline{\sigma}^{-d}} \\
    &\leq t! \sqrt{\norms{\Dcal}\underline{\sigma}^{-d}} k_s c_s^t . \qedhere
  \end{align*}
\end{proof}
\subsection{Parametric regularity of the solution}
For this subsection, we require an elliptic regularity result,
which we state as an assumption:

\begin{assumption}\label{assumption:er}
  Let $D$ be a sufficiently smooth such that,
  for all
  \begin{equation*}
    \Bbfm \in C^{\tau-1, 1}(\overline{D}; \Rbbb_{\symm}^{d \times d})
  \end{equation*}
  that fulfil \eqref{eq:Aellipticity},
  we have that
  the problem of solving
  \begin{equation*}
    \groupp[\Big]{\Bbfm \Grad_{\xbfm} u, \Grad_x v}_{L^2(D; \Rbbb^d)} = \groupp{h, v}_{L^2(D)}
  \end{equation*}
  for any $h \in H^{\tau-1}(D)$ has a unique solution $u \in H_0^1(D)$,
  which also lies in $H^{\tau+1}(D)$,
  with
  \begin{equation*}
    \norm{u}_{\tau+1, 2, D}
    \leq C_{er} \norm{h}_{\tau-1, 2, D} ,
  \end{equation*}
  where $C_{er}$ only depends on $D$, $\underline{\sigma}$, $\overline{\sigma}$, $\tau$
  and a bound on $\norm{\Bbfm}_{C^{\tau-1, 1}(\overline{D}; \Rbbb_{\symm}^{d \times d})}$.
\end{assumption}
Such an elliptic regularity estimate for example is known for $\tau = 1$,
when the domain is convex and bounded,
see \cite[Propositions 3.2.1.2 and 3.1.3.1]{Grisvard}.
The elliptic regularity estimate is also known to hold for $\tau \geq 1$ and $d = 2$,
when the domain's boundary is smooth,
see \cite{BLN17}.

This obviously directly implies the following result.
\begin{lemma}\label{lemma:eubound}
  The unique solution $\hat{u} \in L_{\Pbbb_\ybfm}^\infty\groupp[\big]{\square; H_0^1(D)}$
  of \eqref{eq:pullbackswsodp} fulfils
  $\hat{u} \in L_{\Pbbb_\ybfm}^\infty\groupp[\big]{\square; H^{\tau+1}(D)}$,
  with
  \begin{equation*}
    \normt{\hat{u}}_{\tau+1, 2, D}
    \leq C_{er} \normt{f}_{\tau-1, 2, D} .
  \end{equation*}
\end{lemma}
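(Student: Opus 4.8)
The plan is to read off the result from Assumption~\ref{assumption:er} by applying the elliptic regularity estimate pointwise in the parameter $\ybfm$ and then passing to the essential supremum. Fix a $\ybfm \in \square$ for which $\Vbfm[\ybfm]$ is a $C^{\tau,1}$-diffeomorphism from $D$ onto $\Dfrak[\ybfm]$; by the proposition of Section~\ref{section:modelproblem} this is the case for $\Pbbb_\ybfm$-almost every $\ybfm$. For such a $\ybfm$, the variational problem in \eqref{eq:pullbackswsodp} is exactly the problem considered in Assumption~\ref{assumption:er} with coefficient $\Bbfm \isdef \hat{\Abfm}[\ybfm]$ and right hand side $h \isdef \hat{f}[\ybfm]$, and $\hat{u}[\ybfm]$ is its unique solution in $H_0^1(D)$ (uniqueness holds by the uniform ellipticity \eqref{eq:Aellipticity}, and this is also part of the statement of Assumption~\ref{assumption:er}).

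It then remains to check that, for almost every $\ybfm$, the data $\hat{\Abfm}[\ybfm]$, $\hat{f}[\ybfm]$ meet the hypotheses of Assumption~\ref{assumption:er}, and --- crucially --- that the constant $C_{er}$ may be chosen independently of $\ybfm$. First, by \eqref{eq:pullbackcoefficientdata} we have $\hat{\Abfm}[\ybfm] = \Tbfm\groupp[\big]{\Vbfm[\ybfm], \Jbfm[\ybfm]}$; since $\Vbfm[\ybfm] \in C^{\tau,1}(\overline{D}; \Rbbb^d)$, hence $\Jbfm[\ybfm] \in C^{\tau-1,1}(\overline{D}; \Rbbb^{d \times d})$, and $\Tbfm$ is smooth on the relevant range by Lemma~\ref{lemma:Tbounds}, the chain rule yields $\hat{\Abfm}[\ybfm] \in C^{\tau-1,1}(\overline{D}; \Rbbb_{\symm}^{d \times d})$, and Theorem~\ref{theorem:Afybounds} applied with the zero multi-index (together with the identification $W^{\tau,\infty}(D) = C^{\tau-1,1}(\overline{D})$ valid for the Lipschitz domain $D$) bounds $\norm[\big]{\hat{\Abfm}[\ybfm]}_{C^{\tau-1,1}(\overline{D}; \Rbbb_{\symm}^{d \times d})}$ by a constant independent of $\ybfm$. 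Second, $\hat{\Abfm}[\ybfm]$ fulfils \eqref{eq:Aellipticity}, uniformly in $\ybfm$ --- this is precisely the content of \eqref{eq:Aellipticity}. Third, $\hat{f}[\ybfm] \in H^{\tau-1}(D)$ for almost every $\ybfm$, since Theorem~\ref{theorem:Afybounds} with the zero multi-index gives $\normt{\hat{f}}_{\tau-1,2,D} < \infty$. Consequently Assumption~\ref{assumption:er} applies with one and the same $C_{er}$ for almost every $\ybfm$, so that $\hat{u}[\ybfm] \in H^{\tau+1}(D)$ with
\begin{equation*}
  \norm[\big]{\hat{u}[\ybfm]}_{\tau+1,2,D} \leq C_{er} \norm[\big]{\hat{f}[\ybfm]}_{\tau-1,2,D}
\end{equation*}
for almost every $\ybfm \in \square$; taking the essential supremum over $\ybfm$ gives $\normt{\hat{u}}_{\tau+1,2,D} \leq C_{er}\normt{\hat{f}}_{\tau-1,2,D}$, which is the asserted estimate.

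The only point requiring a little care beyond this pointwise argument is the strong measurability of $\ybfm \mapsto \hat{u}[\ybfm]$ as an $H^{\tau+1}(D)$-valued map, needed to conclude $\hat{u} \in L_{\Pbbb_\ybfm}^\infty\groupp[\big]{\square; H^{\tau+1}(D)}$: this follows from the already known strong measurability of $\hat{u}$ into $H_0^1(D)$ together with the uniform $H^{\tau+1}(D)$-bound just obtained, since on a norm-bounded subset of $H^{\tau+1}(D)$ the $H^1(D)$-norm topology coincides with the weak $H^{\tau+1}(D)$-topology (by the compact embedding $H^{\tau+1}(D) \hookrightarrow\hookrightarrow H^1(D)$ for the bounded domain $D$), and weak measurability into the separable space $H^{\tau+1}(D)$ implies strong measurability by Pettis' theorem. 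Apart from this bookkeeping there is no real obstacle --- the statement is genuinely an immediate consequence of Assumption~\ref{assumption:er} --- and the one thing one must not overlook is exactly the uniformity of $C_{er}$ in $\ybfm$, which is guaranteed by the uniform-in-$\ybfm$ bound on $\norm[\big]{\hat{\Abfm}[\ybfm]}_{C^{\tau-1,1}}$ from Theorem~\ref{theorem:Afybounds} and the uniform ellipticity \eqref{eq:Aellipticity}.
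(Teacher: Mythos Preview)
Your proof is correct and follows exactly the approach the paper intends: the paper simply records this lemma as an immediate consequence of Assumption~\ref{assumption:er} (prefacing it with ``This obviously directly implies the following result'') without giving any further argument, so your careful verification of the hypotheses --- in particular the uniformity of $C_{er}$ in $\ybfm$ and the strong measurability --- only adds rigor. Note that your bound features $\hat{f}$ rather than $f$ as written in the statement; this is evidently a typo in the paper, since the lemma is invoked in the proof of Theorem~\ref{theorem:euybounds} precisely in the form $\normt{\hat{u}}_{\tau+1,2,D} \leq C_{er} k_{\hat{f}}$.
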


Moreover, this higher spatial regularity also carries over to the derivates $\pdif_\ybfm^\alphabfm \hat{u}$.
\begin{theorem}\label{theorem:euybounds}
  For almost every $\ybfm \in \square$,
  the derivatives of the solution $\hat{u}$ of \eqref{eq:pullbackswsodp} satisfy
  \begin{align*}
    \normt[\big]{\pdif_\ybfm^\alphabfm \hat{u}}_{k+1, 2, D}
    &\leq \norms{\alphabfm}!
    \max\groupb[\big]{2, 3 C_{er} \tau^2 d^2 k_{\hat{\Abfm}}, 3 C_{er} k_{\hat{f}}} \\
    &\qquad \groupp[\Big]{
      \max\groupb[\big]{2, 3 C_{er} \tau^2 d^2 k_{\hat{\Abfm}}, 3 C_{er} k_{\hat{f}}}
      \max\groupb[\big]{c_{\hat{f}}, c_{\hat{\Abfm}}}}^{\norms{\alphabfm}}
    \gammabfm^{\alphabfm} .
  \end{align*}
\end{theorem}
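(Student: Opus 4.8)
The plan is to proceed by induction on $\norms{\alphabfm}$, differentiating the parametric weak form \eqref{eq:pullbackswsodp} with respect to $\ybfm$ and using the elliptic regularity of Assumption~\ref{assumption:er} at each step. Applying $\pdif_\ybfm^\alphabfm$ to the identity $\int_D \groupa{\hat{\Abfm}[\ybfm] \Grad_\xbfm \hat{u}[\ybfm], \Grad_\xbfm \hat{v}} = \int_D \hat{f}[\ybfm] \hat{v}$ and using the Leibniz rule gives, for each fixed $\ybfm$,
\begin{equation*}
  \int_D \groupa[\big]{\hat{\Abfm}[\ybfm] \Grad_\xbfm (\pdif_\ybfm^\alphabfm \hat{u})[\ybfm], \Grad_\xbfm \hat{v}}
  = \int_D (\pdif_\ybfm^\alphabfm \hat{f})[\ybfm] \hat{v}
  - \sum_{\zerobfm \neq \betabfm \leq \alphabfm} \binom{\alphabfm}{\betabfm} \int_D \groupa[\big]{(\pdif_\ybfm^\betabfm \hat{\Abfm})[\ybfm] \Grad_\xbfm (\pdif_\ybfm^{\alphabfm - \betabfm} \hat{u})[\ybfm], \Grad_\xbfm \hat{v}} .
\end{equation*}
So $w \isdef (\pdif_\ybfm^\alphabfm \hat{u})[\ybfm]$ solves a problem with the same elliptic operator $\hat{\Abfm}[\ybfm]$ and a right-hand side $h \in H^{\tau-1}(D)$ built from $\pdif_\ybfm^\alphabfm \hat{f}$ and from the divergence of the lower-order terms $(\pdif_\ybfm^\betabfm \hat{\Abfm})[\ybfm] \Grad_\xbfm (\pdif_\ybfm^{\alphabfm-\betabfm} \hat{u})[\ybfm]$. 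Since $\hat{\Abfm}[\ybfm]$ satisfies \eqref{eq:Aellipticity} and is bounded in $C^{\tau-1,1}(\overline{D}; \Rbbb_{\symm}^{d\times d})$ uniformly in $\ybfm$ (by the $\alphabfm = \zerobfm$ case of Theorem~\ref{theorem:Afybounds} together with the embedding $W^{\tau,\infty} \hookrightarrow C^{\tau-1,1}$), Assumption~\ref{assumption:er} yields $\norm{w}_{\tau+1,2,D} \leq C_{er} \norm{h}_{\tau-1,2,D}$.

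Next I would estimate $\norm{h}_{\tau-1,2,D}$. The first term contributes $\normt{\pdif_\ybfm^\alphabfm \hat{f}}_{\tau-1,2,D} \leq \norms{\alphabfm}! k_{\hat{f}} c_{\hat{f}}^{\norms{\alphabfm}} \gammabfm^\alphabfm$ via Theorem~\ref{theorem:Afybounds}. For each term in the sum I would bound, using the algebra/multiplication properties of $W^{\tau,\infty}$ acting on $H^\tau$ (so that multiplying by $\pdif_\ybfm^\betabfm \hat{\Abfm}$ costs a factor $\normt{\pdif_\ybfm^\betabfm \hat{\Abfm}}_{\tau,\infty,D;\Rbbb_{\symm}^{d\times d}}$ and taking $\Grad_\xbfm$ of $\pdif_\ybfm^{\alphabfm-\betabfm}\hat{u}$ costs passing from $H^{\tau+1}$ to $H^\tau$), so that the term is controlled by $\tau^2 d^2 \normt{\pdif_\ybfm^\betabfm \hat{\Abfm}}_{\tau,\infty,D} \normt{\pdif_\ybfm^{\alphabfm-\betabfm}\hat{u}}_{\tau+1,2,D}$ up to the combinatorial weight; the $\tau^2 d^2$ absorbs the number of components and derivative terms appearing when computing the $H^{\tau-1}$-norm of the divergence. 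Plugging in Theorem~\ref{theorem:Afybounds} for $\pdif_\ybfm^\betabfm \hat{\Abfm}$ and the inductive hypothesis for $\pdif_\ybfm^{\alphabfm-\betabfm}\hat{u}$, and writing $K \isdef \max\{2, 3C_{er}\tau^2 d^2 k_{\hat{\Abfm}}, 3C_{er}k_{\hat{f}}\}$ and $c \isdef \max\{c_{\hat{f}}, c_{\hat{\Abfm}}\}$, everything collapses to a sum of the form $\gammabfm^\alphabfm$ times factorials times $c^{\norms{\alphabfm}}$ times powers of $K$, and the induction closes provided the arithmetic-geometric bookkeeping works out — the factor $3$ in $K$ is there precisely to swallow the three contributions (the $\hat{f}$ term, the $\betabfm=\alphabfm$ term of the sum, and the remaining lower-order terms) after a $\binom{\norms{\alphabfm}}{\norms{\betabfm}}$-type convolution identity for the $\gammabfm$-weights, exactly the kind of identity already used in the proof of Theorem~\ref{theorem:Afybounds}.

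The base case $\alphabfm = \zerobfm$ is Lemma~\ref{lemma:eubound} together with $\normt{f}_{\tau-1,2,D} \leq \sqrt{\norms{\Dcal}} \norm{f}_{\infty,\Dcal} \lesssim k_{\hat{f}}$-type bound, so it fits under the constant $K$ by construction. The main obstacle I expect is the careful tracking of the combinatorial constants: one must verify that multiplying an $H^{\tau}$-bounded gradient by a $W^{\tau,\infty}$-bounded matrix and then taking $\Div_\xbfm$ into $H^{\tau-1}$ really only costs a polynomial-in-$(\tau,d)$ factor (captured by $\tau^2 d^2$), and that the convolution of the two factorial/$\gammabfm^\betabfm$ bounds over $\betabfm \leq \alphabfm$, together with the binomial coefficients $\binom{\alphabfm}{\betabfm}$, telescopes cleanly into $\norms{\alphabfm}! \, c^{\norms{\alphabfm}} \gammabfm^\alphabfm$ without generating an extra growing factor — this is where the choice $c = \max\{c_{\hat{f}}, c_{\hat{\Abfm}}\}$ (rather than a sum) and the $+1$ hidden inside $c_{\hat{\Abfm}}, c_{\hat{f}}$ are used. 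The $\ybfm$-uniformity is free throughout since every spatial estimate above holds for almost every $\ybfm$ with $\ybfm$-independent constants, so taking $\esssup_{\ybfm \in \square}$ at the end gives the stated $\normt{\cdot}$-bound.
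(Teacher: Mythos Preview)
Your proposal is correct and follows essentially the same route as the paper: differentiate the weak form, apply the Leibniz rule, invoke Assumption~\ref{assumption:er} with the uniformly bounded coefficient $\hat{\Abfm}[\ybfm]$, and close an induction on $\norms{\alphabfm}$ using Theorem~\ref{theorem:Afybounds} together with the combinatorial identity $\sum_{\norms{\betabfm}=j,\,\betabfm<\alphabfm}\binom{\alphabfm}{\betabfm}\norms{\alphabfm-\betabfm}!\,\norms{\betabfm}! = \norms{\alphabfm}!$. One small correction: the factor $3$ is not there to absorb three separate contributions, but rather because the paper bounds the $\hat{f}$-term by $\tfrac{K}{3}$ and the \emph{entire} sum over $\betabfm<\alphabfm$ (after the geometric-series estimate $\sum_{j=0}^{n-1}K^j\le 2K^{n-1}$, which uses $K\ge 2$) by $\tfrac{2K}{3}$, giving $\tfrac{K}{3}(1+2)=K$.
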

\begin{proof}
  By differentiation of the variational formulation \eqref{eq:pullbackswsodp}
  with respect to $\ybfm$ we arrive,
  for arbitrary $v \in H_0^1(D)$, at
  \begin{equation*}
    \groupp[\Big]{\pdif_\ybfm^\alphabfm \groupp[\big]{\hat{\Abfm} \Grad_\xbfm \hat{u}},
      \Grad_\xbfm \hat{v}}_{L^2(D; \Rbbb^d)} = \groupp[\Big]{\pdif_\ybfm^\alphabfm\hat{f}, \hat{v}}_{L^2(D; \Rbbb)} .
  \end{equation*}
  Applying the Leibniz rule on the left\hyp{}hand side yields
  \begin{equation*}
    \groupp[\bigg]{\sum_{\betabfm \leq \alphabfm} \binom{\alphabfm}{\betabfm}
    \pdif_\ybfm^{\alphabfm-\betabfm} \hat{\Abfm}
    \pdif_\ybfm^{\betabfm} \Grad_\xbfm \hat{u} ,
    \Grad_\xbfm \hat{v}}_{L^2(D; \Rbbb^d)} = \groupp[\Big]{\pdif_\ybfm^\alphabfm\hat{f}, \hat{v}}_{L^2(D; \Rbbb)} .
  \end{equation*}
  Then, by rearranging and using the linearity of the gradient, we find
  \begin{equation*}
    \groupp[\Big]{
      \hat{\Abfm}
      \Grad_\xbfm \pdif_\ybfm^{\alphabfm} \hat{u} ,
      \Grad_\xbfm \hat{v}}_{L^2(D; \Rbbb^d)}
    = \groupp[\bigg]{
      \sum_{\betabfm < \alphabfm} \binom{\alphabfm}{\betabfm}
      \pdif_\ybfm^{\alphabfm-\betabfm} \hat{\Abfm}
      \Grad_\xbfm \pdif_\ybfm^{\betabfm} \hat{u} ,
      \Grad_\xbfm \hat{v}}_{L^2(D; \Rbbb^d)}
    + \groupp[\Big]{\pdif_\ybfm^\alphabfm\hat{f}, \hat{v}}_{L^2(D; \Rbbb)} .
  \end{equation*}
  Using Green's identity, we can then write
  \begin{equation*}
    \groupp[\Big]{
      \hat{\Abfm}
      \Grad_\xbfm \pdif_\ybfm^{\alphabfm} \hat{u} ,
      \Grad_\xbfm \hat{v}}_{L^2(D; \Rbbb^d)}
    = \groupp[\bigg]{
      \sum_{\betabfm < \alphabfm} \binom{\alphabfm}{\betabfm}
      \Div_\xbfm \groupp[\Big]{
        \pdif_\ybfm^{\alphabfm-\betabfm} \hat{\Abfm}
        \Grad_\xbfm \pdif_\ybfm^{\betabfm} \hat{u}}
      + \pdif_\ybfm^\alphabfm\hat{f},
      \hat{v}}_{L^2(D; \Rbbb)} .
  \end{equation*}

  Thus, we arrive at
  \begin{align*}
    \normt[\big]{\pdif_\ybfm^{\alphabfm} \hat{u}}_{\tau+1, 2, D}
    &\leq C_{er}
    \normt[\bigg]{\pdif_\ybfm^\alphabfm\hat{f}
      + \sum_{\betabfm < \alphabfm} \binom{\alphabfm}{\betabfm}
      \Div_\xbfm \groupp[\Big]{
        \pdif_\ybfm^{\alphabfm-\betabfm} \hat{\Abfm}
        \Grad_\xbfm \pdif_\ybfm^{\betabfm} \hat{u}}}_{\tau-1, 2, D} \\
    &\leq C_{er} \groupp[\Bigg]{
      \normt[\big]{\pdif_\ybfm^\alphabfm\hat{f}}_{\tau-1, 2, D}
      + \sum_{\betabfm < \alphabfm} \binom{\alphabfm}{\betabfm}
      \normt[\Big]{\Div_\xbfm
        \groupp[\Big]{
          \pdif_\ybfm^{\alphabfm-\betabfm} \hat{\Abfm}
          \Grad_\xbfm \pdif_\ybfm^{\betabfm} \hat{u}}}_{\tau-1, 2, D}} \\
    &\leq C_{er} \groupp[\Bigg]{
      \normt[\big]{\pdif_\ybfm^\alphabfm\hat{f}}_{\tau-1, 2, D}
      + \sum_{\betabfm < \alphabfm} \binom{\alphabfm}{\betabfm}
      \tau d \normt[\Big]{
        \pdif_\ybfm^{\alphabfm-\betabfm} \hat{\Abfm}
        \Grad_\xbfm \pdif_\ybfm^{\betabfm} \hat{u}}_{\tau, 2, D}} \\
    &\leq C_{er} \groupp[\Bigg]{
      \normt[\big]{\pdif_\ybfm^\alphabfm\hat{f}}_{\tau-1, 2, D}
      + \sum_{\betabfm < \alphabfm} \binom{\alphabfm}{\betabfm}
      \tau d
      \normt[\big]{\pdif_\ybfm^{\alphabfm-\betabfm} \hat{\Abfm}}_{\tau, \infty, D}
      \normt[\big]{\Grad_\xbfm \pdif_\ybfm^{\betabfm} \hat{u}}_{\tau, 2, D}} \\
    &\leq C_{er} \groupp[\Bigg]{
      \norms{\alphabfm}!
      k_{\hat{f}}
      c_{\hat{f}}^{\norms{\alphabfm}}
      \gammabfm^{\alphabfm}
      + \sum_{\betabfm < \alphabfm} \binom{\alphabfm}{\betabfm}
      \norms{\alphabfm-\betabfm}!
      \tau^2 d^2 k_{\hat{\Abfm}}
      c_{\hat{\Abfm}}^{\norms{\alphabfm-\betabfm}}
      \gammabfm^{\alphabfm-\betabfm}
      \normt[\big]{\pdif_\ybfm^{\betabfm} \hat{u}}_{\tau+1, 2, D}}
  \end{align*}
  from which we derive
  \begin{equation*}
    \normt[\big]{\pdif_\ybfm^\alphabfm \hat{u}}_{\tau+1, 2, D}
    \leq \frac{k}{3}
    \norms{\alphabfm}!
     c^{\norms{\alphabfm}}
     \gammabfm^{\alphabfm}
     + \frac{k}{3}
     \sum_{\betabfm < \alphabfm} \binom{\alphabfm}{\betabfm}
     \norms{\alphabfm-\betabfm}!
     c^{\norms{\alphabfm-\betabfm}}
     \gammabfm^{\alphabfm-\betabfm}
     \normt[\big]{\pdif_\ybfm^{\betabfm} \hat{u}}_{\tau+1, 2, D} ,
  \end{equation*}
  where
  \begin{equation*}
    k \isdef
    \max\groupb[\Big]{2, 3 C_{er} \tau^2 d^2 k_{\hat{\Abfm}}, 3 C_{er} k_{\hat{f}}}
  \end{equation*}
  and
  \begin{equation*}
    c \isdef \max\groupb[\Big]{c_{\hat{f}}, c_{\hat{\Abfm}}}
  \end{equation*}

  We note that, by definition of $k$, we have $k \geq 2$
  and furthermore, because of Lemma~\ref{lemma:eubound}, we also have that
  $\normt[\big]{\hat{u}}_{\tau+1, 2, D} \leq C_{er} k_{\hat{f}} \leq k$,
  which means that the assertion is true for $\norms{\alphabfm} = 0$.
  Thus, we can use an induction over $\norms{\alphabfm}$ to prove the hypothesis
  \begin{equation*}
    \norm[\big]{\pdif_\ybfm^\alphabfm \hat{u}}_{\tau+1, 2, D} 
    \leq \norms{\alphabfm}! \gammabfm^\alphabfm k (k c)^{\norms{\alphabfm}}
  \end{equation*}
  for $\norms{\alphabfm} > 0$.
  
  Let the assertions hold for all $\alphabfm$,
  which satisfy $\norms{\alphabfm} \leq n-1$ for some $n \geq 1$.
  Then, we know for all $\alphabfm$ with $\norms{\alphabfm} = n$ that
  \begin{align*}
    \normt[\big]{\pdif_\ybfm^\alphabfm \hat{u}}_{\tau+1, 2, D}
    &\leq \frac{k}{3}
    \norms{\alphabfm}!
    c^{\norms{\alphabfm}}
    \gammabfm^{\alphabfm}
    + \frac{k}{3}
    \sum_{\betabfm < \alphabfm} \binom{\alphabfm}{\betabfm}
    \norms{\alphabfm-\betabfm}!
    c^{\norms{\alphabfm-\betabfm}}
    \gammabfm^{\alphabfm-\betabfm}
    \normt[\big]{\pdif_\ybfm^{\betabfm} \hat{u}}_{\tau+1, 2, D} \\
    &\leq \frac{k}{3}
    \norms{\alphabfm}!
    c^{\norms{\alphabfm}}
    \gammabfm^{\alphabfm}
    + \frac{k}{3}
    \sum_{\betabfm < \alphabfm} \binom{\alphabfm}{\betabfm}
    \norms{\alphabfm-\betabfm}!
    c^{\norms{\alphabfm-\betabfm}}
    \gammabfm^{\alphabfm-\betabfm}
    \norms{\betabfm}! \gammabfm^\betabfm k (k c)^{\norms{\betabfm}} \\
    &\leq \frac{k}{3}
    \norms{\alphabfm}!
    c^{\norms{\alphabfm}}
    \gammabfm^{\alphabfm}
    + \frac{k}{3}
    \gammabfm^{\alphabfm}
    c^{\norms{\alphabfm}}
    k \sum_{j=0}^{n-1}
    k^{j}
    \sum_{\substack{\betabfm < \alphabfm\\\norms{\betabfm}=j}}
    \binom{\alphabfm}{\betabfm}
    \norms{\alphabfm-\betabfm}! \norms{\betabfm}! .
  \end{align*}
  Making use of the combinatorial identity
  \begin{equation*}
    \sum_{\substack{\betabfm < \alphabfm\\\norms{\betabfm}=j}}
    \binom{\alphabfm}{\betabfm}
    \norms{\alphabfm-\betabfm}! \norms{\betabfm}!
    = \norms{\alphabfm}! ,
  \end{equation*}
  see \cite{HS19},
  yields
  \begin{equation*}
    k \sum_{j=0}^{n-1}
    k^{j}
    \sum_{\substack{\betabfm < \alphabfm\\\norms{\betabfm}=j}}
    \binom{\alphabfm}{\betabfm}
    \norms{\alphabfm-\betabfm}! \norms{\betabfm}!
    = \norms{\alphabfm}!
    k \sum_{j=0}^{n-1} k^{j}
    = \norms{\alphabfm}!
    k \frac{k^{\norms{\alphabfm}}}{k - 1}
    \leq \norms{\alphabfm}! 2 k^{\norms{\alphabfm}} ,
  \end{equation*}
  as $k \geq 2$ implies that $2 (k-1) \geq k$.
  Finally, we arrive at
  \begin{equation*}
    \normt[\big]{\pdif_\ybfm^\alphabfm \hat{u}}_{\tau+1 ,2}
    \leq \frac{k}{3}
    \norms{\alphabfm}!
    c^{\norms{\alphabfm}}
    \gammabfm^{\alphabfm}
    + \frac{k}{3}
    \gammabfm^{\alphabfm}
    c^{\norms{\alphabfm}}
    \norms{\alphabfm}!
    2 k^{\norms{\alphabfm}}
    \leq
    \norms{\alphabfm}!
    k
    (k c)^{\norms{\alphabfm}}
    \gammabfm^{\alphabfm}
  \end{equation*}
  which completes the proof.
\end{proof}
%
\section{The coupling of FEM and BEM}\label{section:coupling}
While we have considered general random domain mappings in the previous subsections,
we will now restrict them according to the remarks made in the introduction.
That is, we assume for the rest of the article that we are given
a random boundary description, $\Gamma[\omega]$, and a fixed, deterministic subdomain $B$,
which describe our random domain, compare Figure~\ref{fig:domain} when $\Gamma = \Gamma[\omega]$.
Moreover, we assume that we are interested the some quantity of interest
that is based on the knowledge of $u|_{B}$ as in \eqref{eq:QoI}.

We will assume that there is a random domain mapping $\Vbfm$ which fulfils
the Assumption~\ref{assumption:Vdecay} as well as fulfilling
$\Vbfm[\omega]|_B = \Id_B$ and $\Vbfm[\omega](\partial D) = \Gamma[\omega]$
for almost any $\omega$.
Then,
we know from the previous section that $\hat{u} \colon \square \to H^{\tau+1}(D)$
is analytic which also implies that $u|_B \colon \square \to H^{\tau+1}(B)$
is analytic.

So, to be able to use multilevel quadrature to compute the quantity of interest efficiently,
we consider a formulation here,
that enables us to compute the Galerkin solution $u_{h}[\omega] \in H^1(B)$
with a mesh on $B$ but without needing a mesh on $\Dfrak[\omega] \setminus B$
or needing the knowledge of the random domain mapping.
One arrives at such a formulation by reformulate the boundary value 
problem as two coupled problems involving only boundary 
integral equations on the random boundary $\Gamma[\omega]$.
\subsection{Newton potential}
For sake of simplicity in representation, we shall
restrict ourselves the deterministic boundary value problem
\begin{equation}\label{eq:PDE0}
  -\Laplace u = f\ \text{in $D$}, \quad
  	u = 0\ \text{on $\Gamma\isdef\partial D$},
\end{equation}
i.e., the domain $D$ is assumed to be fixed.
Of course, when applying a sampling method 
for \eqref{eq:SPDE1}, the underlying domains are 
always different. In order to 
resolve the inhomogeneity in \eqref{eq:PDE0}, we 
introduce a Newton potential $\Ncal_f$ which
satisfies
\begin{equation}	\label{eq:newton}
  -\Laplace \Ncal_f = f \quad \text{in $\widetilde{D}$}.
\end{equation}
Here, $\widetilde{D}$ is a sufficiently large domain
containing $\Dfrak[\omega]$ almost surely. 

The Newton potential is supposed to be explicitly known 
like in our numerical example (see Section~\ref{section:results})
or computed with sufficiently high accuracy. Especially, 
since the domain $\widetilde{D}$ can be chosen fairly 
simple, one can apply finite elements based on tensor 
products of higher order spline functions (in $[-R,R]^d$) or 
dual reciprocity methods. Notice that the Newton potential 
has to be computed only once in advance.

By making the ansatz
\begin{equation}	\label{eq:ansatz}
  u = \Ncal_f + \tilde{u}
\end{equation}
and setting $\tilde{g} \isdef g-\Ncal_f$, we arrive at 
the problem of seeking a harmonic function $\tilde{u}$ 
which solves the following Dirichlet problem for the 
Laplacian
\begin{equation}	\label{eq:PDE1}
   \Laplace  \tilde{u} = 0\ \text{in $D$},\quad
	\tilde{u} = \tilde{g}\ \text{on $\Gamma$}.
\end{equation}
Now, we are able to apply the coupling of 
finite elements and boundary elements.

\subsection{Reformulation as a coupled problem}
For the subdomain $B\subset D$, we set $\Sigma 
\isdef \partial B$, see Figure~\ref{fig:domain} for an illustration. 
The normal vectors $\nbfm$ at $\Gamma$ and $\Sigma$ 
are assumed to point into $D\setminus\overline{B}$. We 
shall split \eqref{eq:PDE1} in two coupled boundary 
value problems in accordance with
\begin{alignat}{3}
  \Laplace\tilde{u} &= 0  		  && \qquad \text{in}\ B, \nonumber\\
   \Laplace\tilde{u} &= 0  		  && \qquad \text{in}\ D\setminus\overline{B}, \nonumber\\
 \lim_{\begin{smallmatrix}\zbfm\to\xbfm\\\zbfm\in B\end{smallmatrix}}\tilde{u}(\zbfm)
  	&= \lim_{\begin{smallmatrix}\zbfm\to\xbfm\\\zbfm\in D\setminus\overline{B}\end{smallmatrix}}\tilde{u}(\zbfm)
				  && \qquad \text{for all}\ \xbfm\in\Sigma,\label{eq:couple}\\
 \lim_{\begin{smallmatrix}\zbfm\to\xbfm\\\zbfm\in B\end{smallmatrix}}\frac{\partial\tilde{u}}{\partial \nbfm}(\zbfm)
  	&= \lim_{\begin{smallmatrix}\zbfm\to\xbfm\\\zbfm\in D\setminus\overline{B}\end{smallmatrix}}
		\frac{\partial\tilde{u}}{\partial \nbfm}(\zbfm)
				  && \qquad \text{for all}\ \xbfm\in\Sigma,\nonumber\\
  \tilde{u} &= \tilde{g} &&\qquad \text{on}\ \Gamma.\nonumber
\end{alignat}

\begin{figure}
  \begin{center}
    \begin{tikzpicture}
      \draw[draw=black, fill=black!20] (0cm, 0cm) ellipse (1cm);
      \node (0,0) {\includegraphics[width=6.78cm]{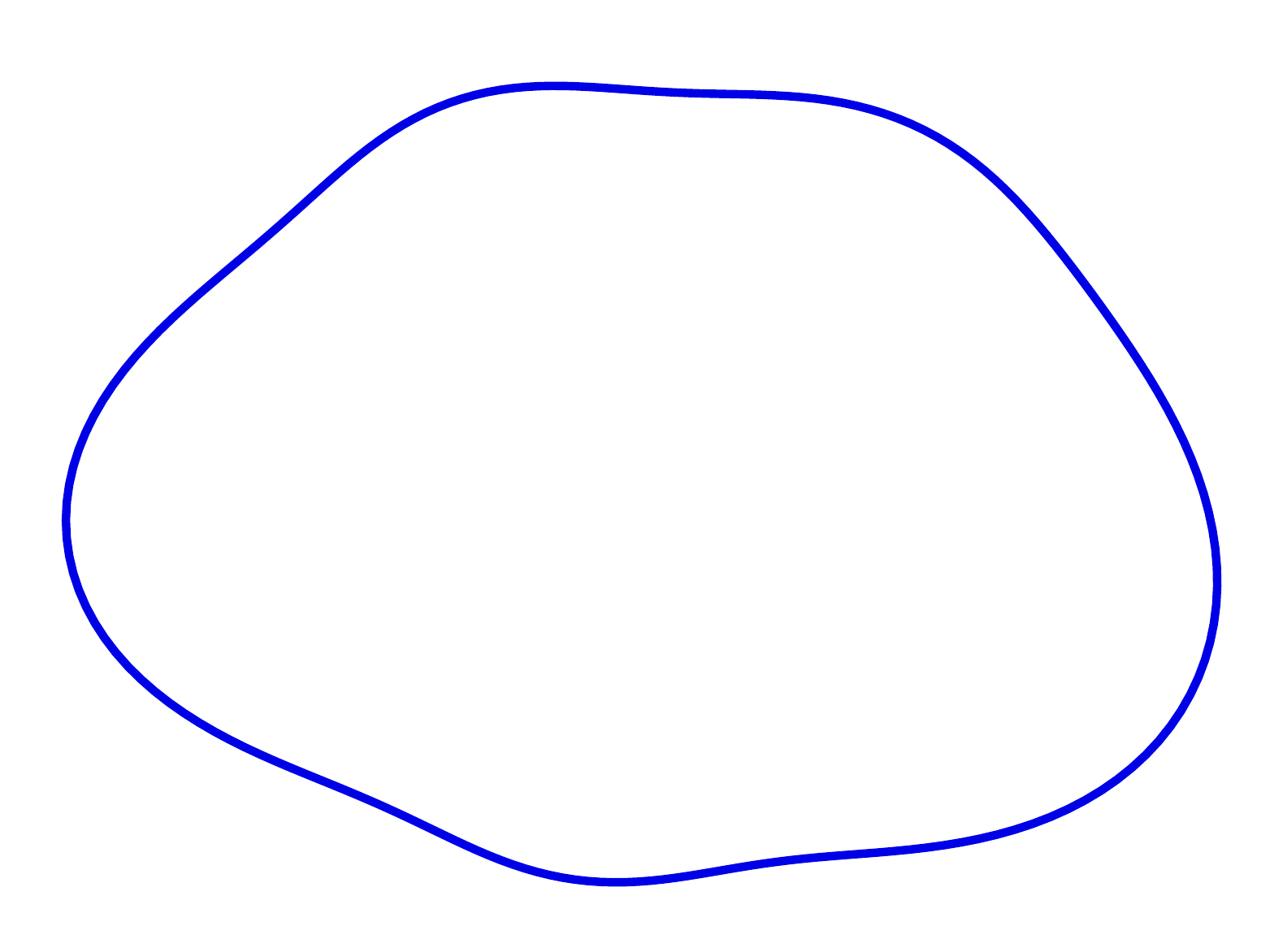}};
      
      \draw (0cm, 0cm) node {$B$};
      \draw (30:0.9cm) node[anchor=south west] {$\Sigma$};
      \draw (30:2.5cm) node[anchor=south west] {$\Gamma$};
    \end{tikzpicture}
    \caption{\label{fig:domain}
      The domain $D$, the subdomain $B$,
      and the boundaries $\Gamma = \partial D$
      and $\Sigma = \partial B$.}
  \end{center}
\end{figure}

In order to derive suitable boundary integral equations 
for the problem in $D\setminus\overline{B}$, we define the 
single layer operator $\Vcal_{\Phi\Psi}$, the double 
layer operator $\Kcal_{\Phi\Psi}$ and its adjoint 
$\Kcal_{\Psi\Phi}^\star$, and the hypersingular 
operator $\Wcal_{\Phi\Psi}$ with respect to 
the boundaries $\Phi,\Psi\in\{\Gamma,\Sigma\}$ by
\[
  \left.\begin{array}{l}
  \displaystyle{(\Vcal_{\Phi\Psi}v)(\xbfm)
  	\isdef \int_\Phi G(\xbfm,\zbfm)v(\zbfm)\dif\!\sigma_\zbfm,}\\
  \displaystyle{(\Kcal_{\Phi\Psi}v)(\xbfm) \isdef \int_\Phi
  	\frac{\partial G(\xbfm,\zbfm)}{\partial\nbfm(\zbfm)} v(\zbfm)\dif\!\sigma_\zbfm},\\
  \displaystyle{(\Kcal_{\Phi\Psi}^\star v)(\xbfm) \isdef \int_\Phi
  	\frac{\partial}{\partial\nbfm(\xbfm)} G(\xbfm,\zbfm)v(\zbfm)\dif\!\sigma_\zbfm},\\
  \displaystyle{(\Wcal_{\Phi\Psi}v)(\xbfm) \isdef - \frac{\partial}{\partial\nbfm(\xbfm)} \int_\Phi
  	\frac{\partial G(\xbfm,\zbfm)}{\partial \nbfm(\zbfm)}v(\zbfm)\dif\!\sigma_\zbfm},
		\end{array}\quad\right\}\quad\xbfm\in\Psi.
\]
Here, $G(\xbfm,\zbfm)$ denotes the fundamental 
solution of the Laplacian which is given by
\[
  G(\xbfm,\zbfm) = \begin{cases} -\frac{1}{2\pi}\log\|\xbfm-\zbfm\|,&d=2,\\
  	\frac{1}{4\pi\|\xbfm-\zbfm\|},&d=3.\end{cases}
\]

By introducing the variables $\sigma_\Sigma \isdef ({\partial\tilde{u}}
/{\partial \nbfm})|_\Sigma$ and $\sigma_\Gamma \isdef ({\partial\tilde{u}}
/{\partial \nbfm})|_\Gamma$, the coupled system \eqref{eq:couple} 
yields the following nonlocal boundary value problem: Find 
$(\tilde{u},\sigma_\Sigma,\sigma_\Gamma)$ such that
\begin{alignat}{3}
  \Laplace\tilde{u} &= 0 && \qquad \text{in}\ B,    \nonumber \\
  \Laplace\tilde{u} &= 0	&& \qquad \text{on}\ \Omega\setminus\overline{B},  \nonumber \\
  -\Wcal_{\Sigma\Sigma}\tilde{u} - \Wcal_{\Gamma\Sigma}\tilde{g}
  	+ \Big(\frac{1}{2}-\Kcal_{\Sigma\Sigma}^\star\Big)\sigma_\Sigma
	  - \Kcal_{\Gamma\Sigma}^\star\sigma_\Gamma
		&= \sigma_\Sigma  && \qquad \text{on}\ \Sigma,
  	\label{continuous problem}\\
  \Big(\frac{1}{2}-\Kcal_{\Sigma\Sigma}\Big)\tilde{u}-\Kcal_{\Gamma\Sigma}\tilde{g}
  	+ \Vcal_{\Sigma\Sigma}\sigma_\Sigma + \Vcal_{\Gamma\Sigma}\sigma_\Gamma &= 0
		&& \qquad \text{on}\ \Sigma,\nonumber\\
  -\Kcal_{\Sigma\Gamma}\tilde{u}+\Big(\frac{1}{2}-\Kcal_{\Gamma\Gamma}\Big)\tilde{g}
  	+ \Vcal_{\Sigma\Gamma}\sigma_\Sigma + \Vcal_{\Gamma\Gamma}\sigma_\Gamma &= 0
		&& \qquad \text{on}\ \Gamma.\nonumber
\end{alignat}
This system is the so-called {\em two integral formulation}, which
is equivalent to our original model problem \eqref{eq:PDE1},
see for example \cite{CS88,H90}.

\subsection{Variational formulation}
We next introduce the product space $\Hcal \isdef 
H^1(B)\times H^{-1/2}(\Sigma)\times H^{-1/2}(\Gamma)$, 
equipped by the product norm
\[
  \|(v,\sigma_\Sigma,\sigma_\Gamma)\|_{\Hcal}^2
  	\isdef \|v\|_{H^1(B)}^2 + \|\sigma_\Sigma\|_{H^{-1/2}(\Sigma)}^2
		+ \|\sigma_\Gamma\|_{H^{-1/2}(\Gamma)}^2.
\]
Further, let $a:\Hcal\times\Hcal\to\Rbbb$,
be the bilinear form defined by
\begin{align}		\label{bilinear form}
  &a\big((v,\sigma_\Sigma,\sigma_\Gamma),(w,\lambda_\Sigma,\lambda_\Gamma)\big)
  	= \displaystyle{\int_B\langle\nabla v,\nabla w\rangle\dif\!\xbfm} \nonumber \\
  &\qquad+ \left(\begin{bmatrix} w \\ \lambda_\Sigma \\ \lambda_\Gamma \end{bmatrix},
	   \begin{bmatrix} \Wcal_{\Sigma\Sigma} & \Kcal_{\Sigma\Sigma}^\star - 1/2 & \Kcal_{\Gamma\Sigma}^\star \\
                       1/2-\Kcal_{\Sigma\Sigma} & \Vcal_{\Sigma\Sigma}             & \Vcal_{\Gamma\Sigma}       \\
		       -\Kcal_{\Sigma\Gamma}    & \Vcal_{\Sigma\Gamma}             & \Vcal_{\Gamma\Gamma}       \end{bmatrix}
	   \begin{bmatrix} v \\ \sigma_\Sigma \\ \sigma_\Gamma \end{bmatrix}\right)_{L^2(\Sigma)\times L^2(\Sigma)\times L^2(\Gamma)}.
\end{align}
For sake of simplicity in representation, we 
omitted the trace operator in expressions like 
$(w,\Wcal_{\Sigma\Sigma}v)_{L^2(\Sigma)}$ etc.

Introducing the linear functional $F:\Hcal\to\Rbbb$,
\[
  F(w,\lambda_\Sigma,\lambda_\Gamma) = (f,w)_{L^2(B)}
  	+ \left(\begin{bmatrix} w \\ \lambda_\Sigma \\ \lambda_\Gamma \end{bmatrix},
	  \begin{bmatrix} -\Wcal_{\Gamma\Sigma} \\
	                  \Kcal_{\Gamma\Sigma}  \\
	                  \Kcal_{\Gamma\Gamma}-1/2\end{bmatrix} 
			  	\tilde{g}\right)_{L^2(\Sigma)\times L^2(\Sigma)\times L^2(\Gamma)},
\]
the variational formulation is given by: Seek 
$(\tilde{u},\sigma_\Sigma,\sigma_\Gamma)\in\Hcal$ 
such that
\begin{equation}	\label{variational formulation}
  a\big((\tilde{u},\sigma_\Sigma,\sigma_\Gamma),(w,\lambda_\Sigma,\lambda_\Gamma)\big) 
  	= F(w,\lambda_\Sigma,\lambda_\Gamma)
\end{equation}
for all $(w,\lambda_\Sigma,\lambda_\Gamma)\in\Hcal$.
In accordance with \cite{EH06}, the variational formulation 
\eqref{variational formulation} admits a unique solution 
$(\tilde{u},\sigma_\Sigma,\sigma_\Gamma)\in\Hcal$ 
for all $F\in\Hcal'$, provided that $D$ has a conformal 
radius which is smaller than one if $d=2$.

\subsection{Galerkin discretization}
Since the variational formulation is stable without further 
restrictions, the discretization is along the lines of \cite{HPPS02}.
We first introduce a uniform triangulation of $B$ which in 
turn induces a uniform triangulation of $\Sigma$. Moreover, 
we introduce a uniform triangulation of the free boundary 
$\Gamma$, which we suppose to have the same mesh 
size as the triangulation of the domain $B$. For the FEM 
part, we consider continuous, piecewise linear ansatz
functions $\{\phi_k^B:k\in\Laplace^B\}$ with respect to the 
given domain mesh. For the BEM part, we employ piecewise 
constant ansatz functions $\{\psi_k^\Phi:k\in\nabla^\Phi\}$ on 
the respective triangulations of the boundaries $\Phi\in\{\Sigma,
\Gamma\}$. 

For sake of simplicity in representation, we set $\phi_k^\Sigma
\isdef \phi_k^B|_\Sigma$ for all $k\in\Laplace^B$. Note that most of these 
functions vanish except for those with nonzero trace which coincide 
with continuous, piecewise linear ansatz functions on $\Sigma$. 
Finally, we shall introduce the set of continuous, piecewise linear 
ansatz functions on the triangulation of $\Gamma$, which 
we denote by $\{\phi_k^\Gamma:k\in\Laplace^\Gamma\}$, 
where $|\Laplace^\Gamma|\sim |\nabla^\Gamma|$. 

Then, introducing the system matrices
\begin{alignat}{3}		\label{matrices}
  &{\bf A} = \Big[(\nabla\phi_{k'}^B,\phi_k^B)_{L^2(B)}\Big]_{k,k'},
  &&\qquad {\bf W}_{\Phi\Psi} = \Big[(\Wcal_{\Phi\Psi}\phi_{k'}^\Phi,\phi_k^\Psi)_{L^2(\Psi)}\Big]_{k,k'},\nonumber \\
  &{\bf B}_{\Phi} = \Big[\textstyle{\frac{1}{2}}(\phi_{k'}^\Phi,\psi_{j,k}^\Phi)_{L^2(\Phi)}\Big]_{k,k'},
  &&\qquad {\bf K}_{\Phi\Psi} = \Big[(\Kcal_{\Phi\Psi}\phi_{k'}^\Phi,\phi_k^\Psi)_{L^2(\Psi)}\Big]_{k,k'},\\
  &{\bf G}_{\Phi} = \Big[(\phi_{k'}^\Phi,\phi_{j,k}^\Phi)_{L^2(\Phi)}\Big]_{k,k'},
  &&\qquad \Vbfm_{\Phi\Psi} = \Big[(\Vcal_{\Phi\Psi}\phi_{k'}^{\Phi},\phi_{j,k}^{\Psi})_{L^2(\Psi)}\Big]_{k,k'},\nonumber
\end{alignat}
where again $\Phi,\Psi\in\{\Sigma,\Gamma\}$, and the data vector
\[
  {\bf g} = \Big[(\tilde g,\phi_k^\Gamma)_{L^2(\Gamma)}\Big]_k,
\]
we obtain the following linear system of equations
\begin{equation}		     \label{linear system}
  \begin{bmatrix} {\bf A} + {\bf W}_{\Sigma\Sigma} & {\bf K}_{\Sigma\Sigma}^T - {\bf B}_\Sigma^T &  {\bf K}_{\Sigma\Gamma}^T   \\
                  {\bf B}_\Sigma - {\bf K}_{\Sigma\Sigma} & \Vbfm_{\Sigma\Sigma} & \Vbfm_{\Gamma\Sigma} \\
                  -{\bf K}_{\Sigma\Gamma}          & \Vbfm_{\Sigma\Gamma} & \Vbfm_{\Gamma\Gamma} \end{bmatrix}
  \begin{bmatrix}{\bf u} \\ {\boldsymbol \sigma}_\Sigma \\ {\boldsymbol \sigma}_\Gamma \end{bmatrix}
  = \begin{bmatrix} -{\bf W}_{\Sigma\Sigma} \\ {\bf K}_{\Gamma\Sigma} \\ {\bf K}_{\Gamma\Gamma}-{\bf B}_\Gamma \end{bmatrix} 
  			{\bf G}_\Gamma^{-1}{\bf g}.
\end{equation}
We mention that ${\bf G}_\Gamma^{-1}{\bf g}$ corresponds 
to the $L^2(\Gamma)$-orthogonal projection of the given Dirichlet 
data $\tilde{g}\in H^{1/2}(\Gamma)$ onto the space of the continuous,
piecewise linear ansatz functions on $\Gamma$. That way, we can
also apply fast boundary element techniques to the boundary integral 
operators on the right hand side of the system \eqref{linear system} 
of linear equations.

The present discretization yields the following error 
estimate, see \cite{EH06}.

\begin{proposition}\label{propo:conv}
Let $h$ denote the mesh size of the triangulations of $B$ 
and $\Gamma$, respectively. We denote the solution of 
\eqref{variational formulation} by $(\tilde u,\sigma_\Sigma,
\sigma_\Gamma)$ and the Galerkin solution by $(\tilde u_h,
\sigma_{\Sigma,h},\sigma_{\Gamma,h})$, respectively. 
Then, we have the error estimate 
\begin{align*}
  &\|(\tilde{u},\sigma_\Sigma,\sigma_\Gamma)
  	-(\tilde{u}_h,\sigma_{\Sigma,h},\sigma_{\Gamma,h})
  		\|_{H^1(B)\times H^{-1/2}(\Sigma)\times H^{-1/2}(\Gamma)} \\
	&\hspace*{3cm}\lesssim h \big(\|(\tilde{u},
		\sigma_\Sigma,\sigma_\Gamma)\|_{H^2(B)\times H^{1/2}(\Sigma)\times H^{1/2}(\Gamma)}
\end{align*}
uniformly in $h$.
\end{proposition}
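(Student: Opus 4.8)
The plan is to establish this error estimate by combining the stability of the variational formulation \eqref{variational formulation} with the approximation properties of the chosen finite element and boundary element spaces, via the standard Céa-type lemma for inf-sup stable problems. Since the bilinear form $a$ is continuous on $\Hcal \times \Hcal$ and satisfies an inf-sup condition (guaranteeing the unique solvability quoted from \cite{EH06}, under the conformal radius assumption when $d=2$), and since the same inf-sup condition holds on the conforming discrete subspace $\Hcal_h \subset \Hcal$ spanned by the continuous piecewise linear functions on $B$ and $\Gamma$ and the piecewise constant functions on $\Sigma$ and $\Gamma$ — this discrete stability being exactly what \cite{HPPS02} provides for this type of coupling — the Galerkin solution is quasi-optimal:
\begin{equation*}
  \norm{(\tilde u, \sigma_\Sigma, \sigma_\Gamma) - (\tilde u_h, \sigma_{\Sigma,h}, \sigma_{\Gamma,h})}_{\Hcal}
  \lesssim \inf_{(v_h, \mu_{\Sigma,h}, \mu_{\Gamma,h}) \in \Hcal_h}
  \norm{(\tilde u, \sigma_\Sigma, \sigma_\Gamma) - (v_h, \mu_{\Sigma,h}, \mu_{\Gamma,h})}_{\Hcal} .
\end{equation*}

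**Next I would** bound the best-approximation error componentwise, choosing the quasi-interpolant (e.g.\ the Clément or $L^2$-projection-type operator) in each space. For the FEM component $\tilde u \in H^2(B)$ approximated by continuous piecewise linears on a quasi-uniform mesh of size $h$, the standard estimate gives $\norm{\tilde u - v_h}_{H^1(B)} \lesssim h \norm{\tilde u}_{H^2(B)}$. For the density $\sigma_\Sigma \in H^{1/2}(\Sigma)$ approximated by piecewise constants, interpolation between the $L^2$-estimate $\norm{\sigma_\Sigma - \mu_{\Sigma,h}}_{L^2(\Sigma)} \lesssim h \norm{\sigma_\Sigma}_{H^1(\Sigma)}$ and the trivial $H^{-1/2}$-stability gives $\norm{\sigma_\Sigma - \mu_{\Sigma,h}}_{H^{-1/2}(\Sigma)} \lesssim h^{3/2} \norm{\sigma_\Sigma}_{H^{1/2}(\Sigma)} \lesssim h \norm{\sigma_\Sigma}_{H^{1/2}(\Sigma)}$; the same argument applies verbatim to $\sigma_\Gamma$ on $\Gamma$. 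Summing the three contributions yields the asserted right-hand side $h\,\norm{(\tilde u, \sigma_\Sigma, \sigma_\Gamma)}_{H^2(B) \times H^{1/2}(\Sigma) \times H^{1/2}(\Gamma)}$, with the constant independent of $h$ thanks to the quasi-uniformity and the $h$-uniform discrete inf-sup.

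**The main obstacle** is the discrete inf-sup stability: unlike the continuous problem, where solvability follows from \cite{EH06}, one must check that the conforming Galerkin subspaces do not lose stability, i.e.\ that the inf-sup constant stays bounded away from zero uniformly in $h$. This is precisely the delicate point addressed in \cite{HPPS02} for symmetric FEM-BEM coupling of this kind, and here I would simply invoke that reference, noting that the block structure of $a$ — an $H^1(B)$-coercive FEM block plus a boundary block built from $\Wcal$, $\Vcal$ and the double-layer operators that is itself stable on $H^{-1/2}(\Sigma) \times H^{-1/2}(\Gamma)$ — carries over to the discrete level because the chosen ansatz functions form a stable pairing (piecewise linears against piecewise constants on each boundary). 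A secondary, more routine point is verifying the $H^{1/2}$ (rather than merely $L^2$) trace regularity of the solution $\tilde u$ on the boundaries, which follows from $\tilde u \in H^2(B)$ and the boundary integral equations; this supplies the norms appearing on the right-hand side and is what ties the estimate back to the elliptic regularity established in Section~\ref{section:regularity}.
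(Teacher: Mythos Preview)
The paper does not supply its own proof of this proposition; it simply states the result and refers to \cite{EH06}. Your C\'ea-type argument --- continuous and discrete inf-sup stability of the coupling bilinear form, followed by componentwise best-approximation estimates for piecewise linears in $H^1(B)$ and piecewise constants in $H^{-1/2}(\Sigma)$ and $H^{-1/2}(\Gamma)$ --- is precisely the standard route behind that citation, so your proposal is correct and in line with what the paper invokes.

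One minor remark: your interpolation sketch for the boundary densities is a bit loose (the pair of endpoint estimates you name does not directly interpolate to $h^{3/2}\|\sigma\|_{H^{1/2}}$); the clean statement is the standard approximation result $\|\sigma - P_h\sigma\|_{H^{-1/2}} \lesssim h^{s+1/2}\|\sigma\|_{H^s}$ for $0\le s\le 1$, which at $s=1/2$ gives exactly the factor $h$ you need. This is cosmetic and does not affect the validity of the argument.
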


\subsection{Multilevel based solution of the coupling formulation}\label{section:MLCoupling}
We shall encounter some issues on the efficient multilevel 
based solution of the system \eqref{linear system} of linear 
equations. The complexity is governed by the BEM part 
since the boundary element matrices are densely populated.
Following \cite{HPPS02,HPPS03}, we apply wavelet matrix 
compression to reduce this complexity such that the 
over-all complexity is governed by the FEM part. On 
the other hand, according to \cite{HPPS03,HK96}, the
Bramble-Pasciak-CG (see \cite{BP88}) provides an efficient
and robust iterative solver for the above saddle point system. 
Combining a nested iteration with the BPX preconditioner 
(see \cite{BPX90}) for the FEM part and a wavelet 
preconditioning (see \cite{DK92,Schneider}) for the BEM part, 
we derive an asymptotical optimal solver for the 
above system, see \cite{HPPS03} for the details. We 
refer the reader to \cite{HPPS03} for the details of the
implementation of a similar coupling formulation.

\section{Multilevel quadrature method}\label{section:MLQ}
The crucial idea of the multilevel quadrature to compute
the quantity of interest \eqref{eq:QoI} is to combine an 
appropriate sequence of quadrature rules for the stochastic 
variable with the multilevel discretization in the spatial 
variable. To that end, we first parametrize the quantity 
of interest by using \eqref{eq:KLp} over the cube $\square 
= [-1,1]^{\Nbbb^*}$ and compute
\begin{equation}\label{eq:sg-exp}
  \QoI(u) = \int_\square\Fcal\groupp[\big]{u(\cdot)} \dif\!\Pbbb_\ybfm
  	\approx \sum_{\ell=0}^L {\bf Q}_{L-\ell}\Big(\Fcal\groupp[\big]{u_\ell(\cdot)}
		-\Fcal\groupp[\big]{u_{\ell-1}(\cdot)}\Big),
\end{equation}
where $u_{\ell-1} := 0$. Herein, for the spatial approximation, 
we shall use the multilevel representation from Subsection
\ref{section:MLCoupling} to compute the Galerkin solution 
$u_\ell\in H^1(B)$ on level $\ell$ that corresponds to the 
step size $h_\ell = 2^{-\ell}$. For the approximation in the 
stochastic variable \({\bf y}\), we shall thus provide a sequence
of quadrature formulae $\{{\bf Q}_\ell\}$ for the integral
\[
  \int_{\square} v(\xbfm,\ybfm)\dif\!\Pbbb_\ybfm
\]
of the form
\[
  {\bf Q}_\ell v = \sum_{i=1}^{N_\ell}\boldsymbol\omega_{\ell,i} 
  	v(\cdot,\boldsymbol\xi_{\ell,i}).
\]
For our purposes, we assume that the number of points $N_\ell$ 
of the quadrature formula ${\bf Q}_\ell$ is chosen such that the 
corresponding accuracy is
\begin{equation}	\label{eq:Q2}
  \varepsilon_\ell= 2^{-\ell}, \quad \ell = 0,1,\ldots,L.
\end{equation}
Since the multilevel quadrature can be interpreted as a sparse-grid 
approximation, cf.~\cite{HPS13}, it is known that mixed regularity 
results of the integrand have to be provided as derived in Section
\ref{section:regularity}, compare \cite{DKLNS14,GHP15,HPS13} for
example. Since the mapping $u:\square\to H^{\tau+1}(B)$ is 
analytic, we can especially apply the quasi-Monte Carlo method, 
the Gaussian quadrature, or the sparse grid quadrature, see 
e.g.~\cite{GHP15}. Especially, in case of $H^2$-regularity 
($\tau = 1$) and $\Fcal = u|_B$, i.e., $\QoI(u) = \Mean(u|_B)$, 
we obtain then the error estimate
\begin{equation}\label{eq:concergenceH1}
  \Bigg\|\Mean(u)-\sum_{\ell=0}^L {\bf Q}_{L-\ell}\Big(u_\ell(\cdot)
		-u_{\ell-1}(\cdot)\Big)\Bigg\|_{H^1(B)} = \mathcal{O}(h_L).
\end{equation}
Notice that the computational complexity of the multilevel 
quadrature \eqref{eq:sg-exp} is considerably reduced compared 
to a standard single-level quadrature method which has the same 
accuracy, see e.g.\ \cite{BSZ11,CGST11,HPS13}.

\begin{remark}
By choosing the accuracy of the quadrature in accordance with 
$\varepsilon_\ell = 4^{-\ell}$ for $\ell=0,1,\ldots,L$ instead of 
\eqref{eq:Q2}, the application of the Aubin-Nitsche trick in 
Proposition~\ref{propo:conv} implies the $L^2$-error estimate
\begin{equation}\label{eq:concergenceL2}
  \Bigg\|\Mean(u)-\sum_{\ell=0}^L {\bf Q}_{L-\ell}\Big(u_\ell(\cdot)
		-u_{\ell-1}(\cdot)\Big)\Bigg\|_{L^2(B)} = \mathcal{O}(h_L^2).
\end{equation}
\end{remark}

\section{Numerical results}\label{section:results}
In our numerical example, we consider the reference
domain $D$ to be the ellipse with semi-axis 0.6 
and 0.4. We represent its boundary by $\gamma_{\refd}:
[0,2\pi)\to\partial D$ in polar coordinates and perturb 
this parametrization in accordance with
\[
  \boldsymbol\gamma(\omega,\phi) = \boldsymbol\gamma_{\refd}(\phi) 
  	+ \varepsilon\sum_{k=0}^\infty w_k\big\{ y_{-k}[\omega]\sin(k\phi) + y_k[\omega]\cos(k\phi)\big\}
\]
where $y_k\in(-0.5,0.5)$ for all $k\in\mathbb{Z}$ and
$\varepsilon = 0.05$. The weights $w_k$ are chosen as
$w_k = 1$ for all $|k|\le 5$ and $w_k = (k-5)^{-6}$ for
all $|k|>5$. Hence, we have the decay $\gamma_k\sim k^{-4}$
for the choice $\tau = 1$, which is sufficient for applying the
quasi Monte Carlo method based on the Halton sequence.
In practice, we set all $w_k$ to zero if $|k|>64$ which 
corresponds to a dimension truncation after 129 dimensions.
The random parame\-tri\-zation $\boldsymbol\gamma[\omega]$ 
induces the random domain $\Dfrak[\omega]$. The 
fixed subset $B\subset D$ is given as the ball of 
radius 0.2, centered in the origin. For an illustration of 
four random draws, see Figure~\ref{fig:sample}.

\begin{figure}
\begin{center}
\includegraphics[width=0.80\textwidth]{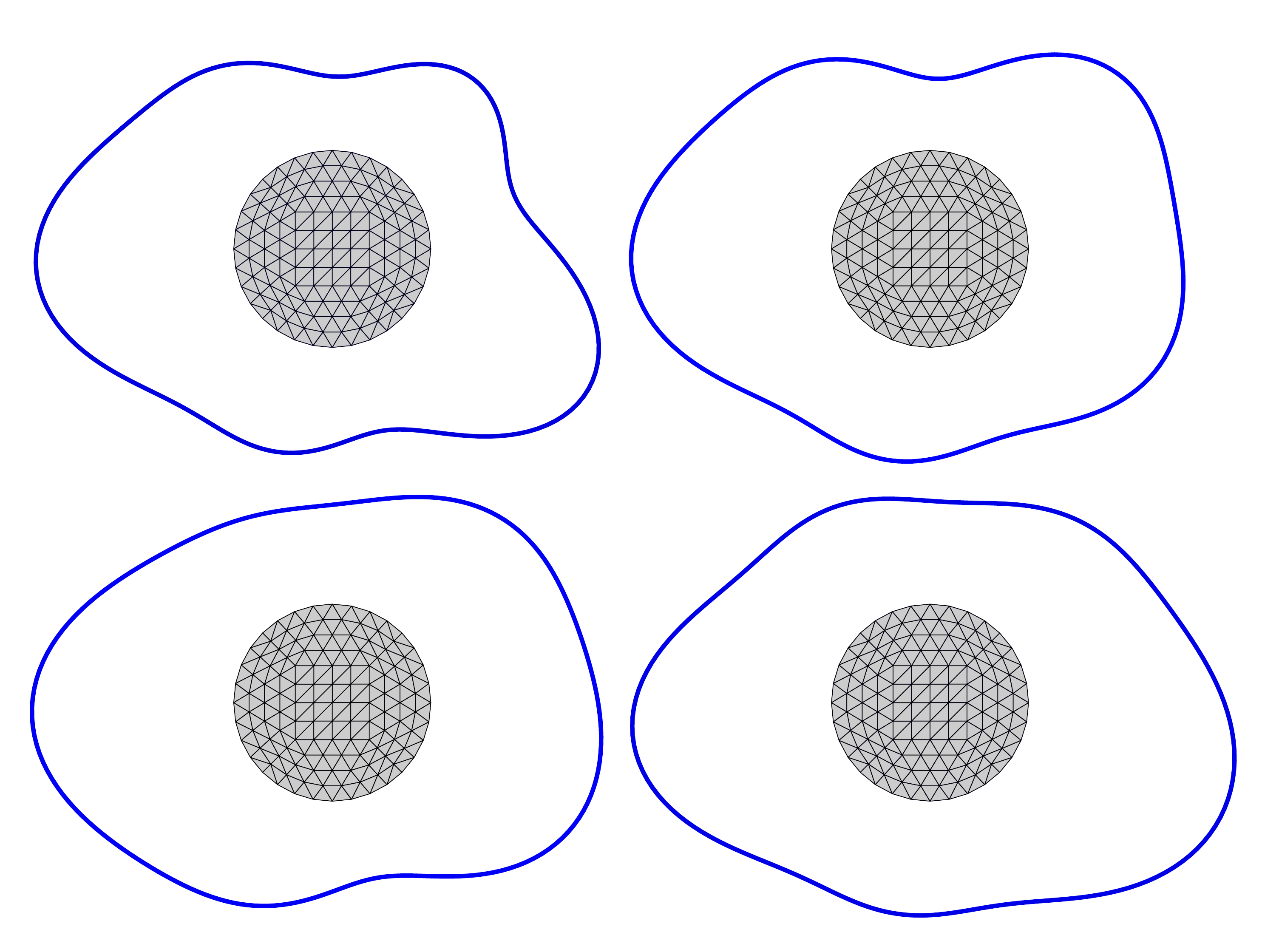}
\caption{\label{fig:sample}Four samples of the random domain
with finite element triangulation of $B$ on refinement level 2.}
\end{center}
\end{figure}

On the random domain $\Dfrak[\omega]$, let the Poisson 
equation 
\[
  -\Laplace u [\omega] = 1\ \text{in $\Dfrak[\omega]$},
  \quad u [\omega] = 0\ \text{on $\partial \Dfrak[\omega]$}
\]
be given. A suitable Newton potential is analytically 
given by $\Ncal_f = -(x_1^2+x_2^2)/4$. We consider the 
$L^2$-tracking type functional
\[
  \QoI(u) = \mathbb{E}\bigg[\frac{1}{2}\int_B |u[\omega]-\overline{u}|^2 \dif\!\xbfm\bigg]
\]
as quantity of interest, where $\overline{u}$ is a given function.
The coarse triangulation of $B$, based on Zl\'amal's curved finite 
elements~\cite{Zenisek}, consists of 14 curved triangles on the 
coarse grid, which are then uniformly refined to get the triangulation 
on the finer grids. The 14 triangles correspond to eight piecewise 
linear and constant boundary elements each on the boundary 
$\partial B$. At the boundary $\partial D$, we likewise consider 
eight piecewise linear and constant boundary elements each 
on level 0. When applying uniform refinement, we arrive at 
the numbers of degrees of freedom in the finite and boundary
element spaces found in Table~\ref{tab:dof}.

\begin{table}
\begin{center}
  \begin{tabular}{rrr}
    \toprule
    level & finite elements & boundary elements \\
    \midrule
    1 & 37 & 32 \\
    2 & 129 & 64 \\
    3 & 481 & 128 \\
    4 & 1857 & 256 \\
    5 & 7297 & 512 \\
    6 & 28929 & 1024 \\
    7 & 115201 & 2048 \\
    8 & 459777 & 4096 \\
    \bottomrule \\
  \end{tabular}
\caption{\label{tab:dof}Number of the degrees of freedom 
of the finite element method and the boundary element method.}
\end{center}
\end{table}

In order to compute the quantity of interest, we will employ the 
quasi-Monte Carlo method based on the Halton sequence, see 
\cite{H60} for example. Since the exact solution is unknown, 
we compute first the quantity of interest on the spatial discretization 
level 8 by using 100\,000 Halton points. Next, we compute the solution 
by the multilevel quasi-Monte Carlo method. Namely, for the multilevel
quasi-Monte Carlo method on level $L$, we apply $N_\ell = 2^{L-\ell} 
N_L$ and $N_\ell = 4^{L-\ell} N_L$ Halton points, respectively, on the
coarser levels $0\le\ell\le L$, where we choose $N_L = 10, 20, 40$ fine 
grid samples.

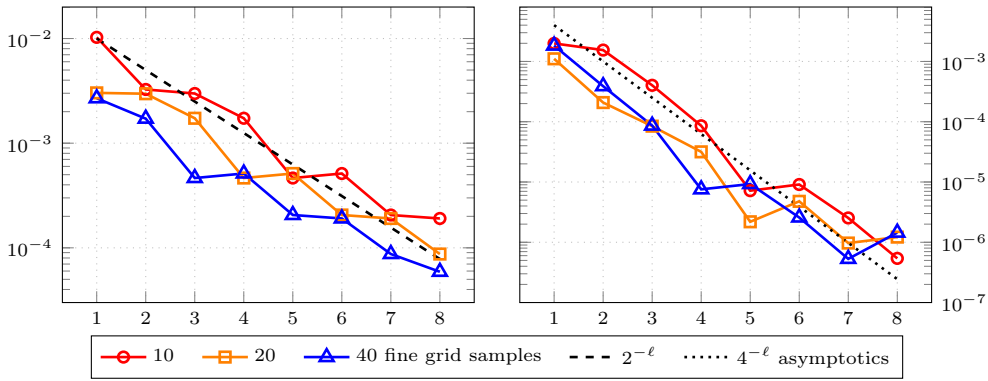
\begin{figure}
\centering
\begin{tikzpicture}[baseline]
  \begin{semilogyaxis}[
      anchor=south east,
      at={(-0.3cm,0cm)},
      yticklabel pos=left,
      xmajorgrids=true,
      ymajorgrids=true,
      ymax=2e-2,
      ymin=3e-5,
      xtick={1,2,...,8},
      max space between ticks=20,
    ]
    \addplot[red, mark=o, mark size=2pt, line width=1pt] table [x=l,y=f10] {
l f10 f20 f40
   1  1.0263e-02   3.0248e-03   2.6850e-03
   2  3.2602e-03   2.9691e-03   1.7151e-03
   3  2.9822e-03   1.7305e-03   4.6444e-04
   4  1.7306e-03   4.6459e-04   5.1323e-04
   5  4.6449e-04   5.1311e-04   2.0593e-04
   6  5.1304e-04   2.0587e-04   1.9084e-04
   7  2.0584e-04   1.9081e-04   8.7246e-05
   8  1.9082e-04   8.7252e-05   5.9037e-05 
   };
    \addplot[orange, mark=square, mark size=2pt, line width=1pt] table [x=l,y=f20] {
l f10 f20 f40
   1  1.0263e-02   3.0248e-03   2.6850e-03
   2  3.2602e-03   2.9691e-03   1.7151e-03
   3  2.9822e-03   1.7305e-03   4.6444e-04
   4  1.7306e-03   4.6459e-04   5.1323e-04
   5  4.6449e-04   5.1311e-04   2.0593e-04
   6  5.1304e-04   2.0587e-04   1.9084e-04
   7  2.0584e-04   1.9081e-04   8.7246e-05
   8  1.9082e-04   8.7252e-05   5.9037e-05 
    };
    \addplot[blue, mark=triangle, mark size=3pt, line width=1pt] table [x=l,y=f40] {
l f10 f20 f40
   1  1.0263e-02   3.0248e-03   2.6850e-03
   2  3.2602e-03   2.9691e-03   1.7151e-03
   3  2.9822e-03   1.7305e-03   4.6444e-04
   4  1.7306e-03   4.6459e-04   5.1323e-04
   5  4.6449e-04   5.1311e-04   2.0593e-04
   6  5.1304e-04   2.0587e-04   1.9084e-04
   7  2.0584e-04   1.9081e-04   8.7246e-05
   8  1.9082e-04   8.7252e-05   5.9037e-05 
    };
    \addplot[black, dashed, line width=1pt] table [x=l,y expr=0.02*2^(-\thisrow{l})] {
l f10 f20 f40
   1  1.0263e-02   3.0248e-03   2.6850e-03
   2  3.2602e-03   2.9691e-03   1.7151e-03
   3  2.9822e-03   1.7305e-03   4.6444e-04
   4  1.7306e-03   4.6459e-04   5.1323e-04
   5  4.6449e-04   5.1311e-04   2.0593e-04
   6  5.1304e-04   2.0587e-04   1.9084e-04
   7  2.0584e-04   1.9081e-04   8.7246e-05
   8  1.9082e-04   8.7252e-05   5.9037e-05 
    }; \label{asy}
  \end{semilogyaxis}
  \begin{semilogyaxis}[
      anchor=south west,
      at={(0.3cm,0cm)},
      yticklabel pos=right,
      xmajorgrids=true,
      ymajorgrids=true,
      ymax=8e-3,
      ymin=1e-7,
      xtick={1,2,...,8},
      max space between ticks=20,
      legend columns =-1,
      legend to name=bothlegend,
    ]
    \addplot[red, mark=o, mark size=2pt, line width=1pt] table [x=l,y=f10] {
l f10 f20 f40
   1  1.9909e-03   1.1068e-03   1.8321e-03
   2  1.5449e-03   2.0802e-04   3.8881e-04
   3  4.0202e-04   8.4404e-05   8.6239e-05
   4  8.5680e-05   3.1638e-05   7.5557e-06
   5  7.1797e-06   2.1875e-06   9.2450e-06
   6  9.0885e-06   4.7805e-06   2.5761e-06
   7  2.5328e-06   9.7100e-07   5.3046e-07
   8  5.4077e-07   1.2185e-06   1.4453e-06
    };
    \addlegendentry{$10$}
    \addlegendimage{empty legend}\addlegendentry{\hspace*{0.5em}}
    \addplot[orange, mark=square, mark size=2pt, line width=1pt] table [x=l,y=f20] {
l f10 f20 f40
   1  1.9909e-03   1.1068e-03   1.8321e-03
   2  1.5449e-03   2.0802e-04   3.8881e-04
   3  4.0202e-04   8.4404e-05   8.6239e-05
   4  8.5680e-05   3.1638e-05   7.5557e-06
   5  7.1797e-06   2.1875e-06   9.2450e-06
   6  9.0885e-06   4.7805e-06   2.5761e-06
   7  2.5328e-06   9.7100e-07   5.3046e-07
   8  5.4077e-07   1.2185e-06   1.4453e-06
    };
    \addlegendentry{$20$}
    \addlegendimage{empty legend}\addlegendentry{\hspace*{0.5em}}
    \addplot[blue, mark=triangle, mark size=3pt, line width=1pt] table [x=l,y=f40] {
l f10 f20 f40
   1  1.9909e-03   1.1068e-03   1.8321e-03
   2  1.5449e-03   2.0802e-04   3.8881e-04
   3  4.0202e-04   8.4404e-05   8.6239e-05
   4  8.5680e-05   3.1638e-05   7.5557e-06
   5  7.1797e-06   2.1875e-06   9.2450e-06
   6  9.0885e-06   4.7805e-06   2.5761e-06
   7  2.5328e-06   9.7100e-07   5.3046e-07
   8  5.4077e-07   1.2185e-06   1.4453e-06
  };
    \addlegendentry{$40$ fine grid samples}
    \addlegendimage{empty legend}\addlegendentry{\hspace*{0.5em}}
    \addlegendimage{/pgfplots/refstyle=asy}\addlegendentry{$2^{-\ell}$}
    \addlegendimage{empty legend}\addlegendentry{\hspace*{0.5em}}
    \addplot[black, dotted, line width=1pt] table [x=l,y expr=0.016*4^(-\thisrow{l})] {
l f10 f20 f40
   1  1.9909e-03   1.1068e-03   1.8321e-03
   2  1.5449e-03   2.0802e-04   3.8881e-04
   3  4.0202e-04   8.4404e-05   8.6239e-05
   4  8.5680e-05   3.1638e-05   7.5557e-06
   5  7.1797e-06   2.1875e-06   9.2450e-06
   6  9.0885e-06   4.7805e-06   2.5761e-06
   7  2.5328e-06   9.7100e-07   5.3046e-07
   8  5.4077e-07   1.2185e-06   1.4453e-06
  };
    \addlegendentry{$4^{-\ell}$ asymptotics}
  \end{semilogyaxis}
\end{tikzpicture}
\ref{bothlegend}
\caption{\label{fig:convergence}Absolute error of the output functional
for different numbers of fine grid samples when increasing the number of
quadrature point linearly (left) and quadratically (right) per level.}
\end{figure}

As it is seen in Figure~\ref{fig:convergence}, we always 
observe the same linear and quadratic convergence rate, respectively,
but with different constants involved. Notice that linear convergence 
is in accordance with \eqref{eq:concergenceH1} while quadratic 
convergence is in accordance with \eqref{eq:concergenceL2}.
Only for the level $\ell=8$ and desired quadratic convergence, 
we observe a stagnation of the convergence. This issues from
the fact that the reference solution computed by a single-level
quadrature method is not accurate enough.

\section{Conclusion}\label{section:conclusion}
We provided regularity estimates of the solution to
elliptic problems on random domains which allow for
the application of multilevel quadrature methods. In
order to avoid the need to compute either a random domain mapping
or to generate meshes for every domain sample,
we couple finite elements with boundary elements.
It has been shown by numerical experiments that this 
approach is indeed able to exploit the additional regularity 
we have in the underlying problem without causing numerical 
problems on too coarse grids.
%
%
\bibliographystyle{plain}
\bibliography{articles,books}
\end{document}